\theoremstyle{plain}
\newtheorem{theorem}{Theorem}[section]
\newtheorem{corollary}[theorem]{Corollary}
\newtheorem{lemma}[theorem]{Lemma}
\theoremstyle{definition}
\newtheorem{definition}[theorem]{Definition}
\theoremstyle{remark}
\DeclareSymbolFont{pxfontssymbolsC}{U}{pxsyc}{m}{n}
\DeclareMathSymbol{\coloneqq}{\mathrel}{pxfontssymbolsC}{66}
\begin{document}

\title{An elliptic regularity theorem for fractional partial differential operators}

\author[1]{Arran Fernandez \thanks{Email: \texttt{af454@cam.ac.uk}}}
\affil[1]{{\small Department of Applied Mathematics \& Theoretical Physics, University of Cambridge, Cambridge, CB3 0WA, United Kingdom}}

\date{}


\maketitle


\begin{abstract}\noindent We present and prove a version of the elliptic regularity theorem for partial differential equations involving fractional Riemann--Liouville derivatives. In this case, regularity is defined in terms of Sobolev spaces $H^s(X)$: if the forcing of a linear elliptic fractional PDE is in one Sobolev space, then the solution is in the Sobolev space of increased order corresponding to the order of the derivatives. We also mention a few applications and potential extensions of this result.
\end{abstract}

\section{Introduction}
\label{sec:intro}
In fractional calculus, the orders of differentiation and integration are extended beyond the integer domain to the real line and even the complex plane. This field of study has a long history, having been considered by Leibniz, Riemann, and Hardy among others \cite{miller}. It also has a wide variety of applications, including in bioengineering \cite{glockle,magin}, chaos theory \cite{zaslavsky}, drug transport \cite{dokoumetzidis,petras,sopasakis}, epidemiology \cite{carvalho}, geohydrology \cite{atangana2}, random walks \cite{zaburdaev}, thermodynamics \cite{vazquez}, and viscoelasticity \cite{koeller}. Many of these cited papers are from the last few years, indicating the importance and relevancy of fractional calculus in modern science.

Fractional derivatives and integrals can be defined in several different ways, not all of which agree with each other, and thus it must always be clear which definition is being used. In fact, new models of fractional calculus are being developed all the time: see for example \cite{caputo,atangana,jarad} for some fractional models developed only in the last few years. In this paper, however, we shall always use the classical Riemann--Liouville formula (Definitions \ref{RLintegral} and \ref{RLderivative}) unless otherwise stated.

\begin{definition}[Riemann--Liouville fractional integral]
\label{RLintegral}
Let $x$ and $\nu$ be complex variables, and $c$ be a constant in the extended complex plane (usually taken to be either $0$ or negative real infinity). For $\mathrm{Re}(\nu)<0$, the $\nu$th derivative, or $(-\nu)$th integral, of a function $f$ is \[D_{c+}^{\nu}f(x)\coloneqq\frac{1}{\Gamma(-\nu)}\int_c^x(x-y)^{-\nu-1}f(y)\,\mathrm{d}y,\] provided that this expression is well-defined. (If $c=-\infty$, the operator is denoted by simply $D_+^{\nu}$ instead of $D_{c+}^{\nu}$.)
\end{definition}

Since $x$, $\nu$, and $c$ are defined to be in the complex plane, we must consider the issue of which branch to use when defining the function $(x-y)^{-\nu-1}$ and which contour from $c$ to $x$ to use for the integration. Clearly $\arg(x-y)$ can be fixed to be always equal to $\arg(x-c)$, i.e. by taking the contour of integration to be the straight line-segment $[c,x]$. And the choice of range for $\arg(x-c)$ usually depends on context: the essential properties of Riemann--Liouville integrals remain unchanged whether $\arg(x-c)$ is assumed to be in $[0,2\pi)$, $(-\pi,\pi]$, or any other range. Here we shall follow \cite[\textsection 22]{samko} in using $\arg(x-c)\in[0,2\pi)$, because we shall usually be assuming $c=-\infty$ and $x\in\mathbb{R}$, in which case $\arg(x-c)=0$ is the obvious choice to make.

\begin{definition}[Riemann--Liouville fractional derivative]
\label{RLderivative}
Let $x,\nu,c$ be as in Definition \ref{RLintegral} except with $\mathrm{Re}(\nu)\geq0$. The $\nu$th derivative of a function $f$ is \[D_{c+}^{\nu}f(x)\coloneqq\tfrac{d^n}{dx^n}\big(D_{c+}^{\nu-n}f(x)\big),\] where $n\coloneqq\lfloor\mathrm{Re}(\nu)\rfloor+1$, provided that this expression is well-defined. (Again, if $c=-\infty$, the operator is denoted by simply $D_+^{\nu}$ instead of $D_{c+}^{\nu}$.)
\end{definition}

For functions $f$ such that $D_{c+}^{\nu}f(x)$ is analytic in $\nu$, Definition \ref{RLderivative} is the analytic continuation in $\nu$ of Definition \ref{RLintegral}. This provides some motivation for why this formula should be used.

When the order of differentiation and integration becomes continuous, the term \textit{differintegration} is often used to cover both. When the order of differintegration lies in the complex plane, its real part is what defines the difference between differentiation and integration.

In the case where $f$ is holomorphic, the following definition (Definition \ref{Cauchy}) can be more useful for applications in complex analysis. It is equivalent to the Riemann--Liouville definition wherever both are defined, as proved in \cite[Chapter 3]{oldham}.

\begin{definition}[Cauchy fractional differintegral]
\label{Cauchy}
Let $x$ and $\nu$ be complex variables, and $c$ be a constant in the extended complex plane. For $\nu\in\mathbb{C}\backslash\mathbb{Z}^-$, the $\nu$th derivative of a function $f$ analytic in a neighbourhood of the line segment $[c,x]$ is \[D_{c+}^{\nu}f(x)\coloneqq\frac{\Gamma(\nu+1)}{2\pi i}\int_{\mathcal{H}}(y-x)^{-\nu-1}f(y)\,\mathrm{d}y,\] provided that this expression is well-defined, where $\mathcal{H}$ is a finite Hankel-type contour with both ends at $c$ and circling once counter-clockwise around $x$.
\end{definition}

Note that Definition \ref{RLintegral} is the natural generalisation of the Cauchy formula for repeated real integrals (see \cite[Chapter II]{miller}), while Definition \ref{Cauchy} is similarly the natural generalisation of Cauchy's integral formula from complex analysis.

Since the Riemann--Liouville fractional derivative is defined using ordinary derivatives of fractional integrals, one might wonder what would happen if the order of these operations were reversed. Using fractional integrals of ordinary derivatives instead, we obtain a different definition of fractional differentiation, this one due to Caputo.

\begin{definition}[Caputo fractional derivative]
\label{Caputo}
Let $x,\nu,c$ be as in Definition \ref{RLintegral} except with $\mathrm{Re}(\nu)\geq0$. The $\nu$th derivative of a function $f$ is \[D_{c+}^{\nu}f(x)\coloneqq D_{c+}^{\nu-n}\Big(\tfrac{d^nf}{dx^n}\Big),\] where $n\coloneqq\lfloor\mathrm{Re}(\nu)\rfloor+1$, provided that this expression is well-defined.
\end{definition}

Fractional \textit{integrals} in the Caputo context are exactly Riemann--Liouville integrals, so a new definition is not needed for them. Lemma \ref{ComposeDerivatives} below shows that the Riemann--Liouville and Caputo fractional derivatives (Definitions \ref{RLderivative} and \ref{Caputo}) are not equivalent in general.

The constant $c$ used in the above definitions can be thought of as a constant of integration. However, in the fractional context it appears in the formulae for derivatives as well as those for integrals. It is almost always assumed to be either $0$ or $-\infty$.

Some standard properties of integer-order differintegrals extend to the fractional case: for instance, $D_{c+}^{\nu}$ is still a linear operator for any $\nu$ and $c$. But other standard theorems of calculus no longer hold in the fractional case, or hold in a more complicated way. For instance, the fractional derivative of a fractional derivative is not always a fractional derivative; composition of fractional differintegrals is governed by the equations in Lemmas \ref{ComposeIntegrals} and \ref{ComposeDerivatives}.

\begin{lemma}[Composition of fractional integrals]
\label{ComposeIntegrals}
For any $x,\mu,\nu\in\mathbb{C}$ with $\mathrm{Re}(\mu)<0$ and any function $f$ continuous in a neighbourhood of $c$, the identity $D_{c+}^{\nu}\big(D_{c+}^{\mu}f(x)\big)=D_{c+}^{\mu+\nu}f(x)$ holds provided these differintegrals exist.
\end{lemma}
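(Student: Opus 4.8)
The plan is to reduce everything to the case of two fractional \emph{integrals} and then account for the extra ordinary derivatives that appear when $\mathrm{Re}(\nu)\geq0$. First I would treat the sub-case $\mathrm{Re}(\nu)<0$, so that both $D_{c+}^{\mu}$ and $D_{c+}^{\nu}$ are genuine integrals in the sense of Definition \ref{RLintegral}. Writing each as its defining integral and substituting one into the other produces an iterated integral over the triangle $\{c<t<y<x\}$; since the statement allows us to assume the differintegrals exist, the integrand is absolutely integrable there, and Fubini's theorem justifies swapping the order of integration. This isolates an inner integral $\int_t^x (x-y)^{-\nu-1}(y-t)^{-\mu-1}\,\mathrm{d}y$, which the substitution $y=t+s(x-t)$ with $s\in[0,1]$ turns into $(x-t)^{-\mu-\nu-1}B(-\mu,-\nu)$, a Beta integral that converges precisely because $\mathrm{Re}(\mu),\mathrm{Re}(\nu)<0$. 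Using $B(-\mu,-\nu)=\Gamma(-\mu)\Gamma(-\nu)/\Gamma(-\mu-\nu)$, the two leading Gamma factors cancel and what remains is exactly the integral defining $D_{c+}^{\mu+\nu}f(x)$.

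Next I would extend to $\mathrm{Re}(\nu)\geq0$. Setting $n\coloneqq\lfloor\mathrm{Re}(\nu)\rfloor+1$, Definition \ref{RLderivative} gives $D_{c+}^{\nu}\big(D_{c+}^{\mu}f\big)=\tfrac{d^n}{dx^n}D_{c+}^{\nu-n}\big(D_{c+}^{\mu}f\big)$, and since $\mathrm{Re}(\nu-n)<0$ the inner composition falls under the first sub-case, yielding $\tfrac{d^n}{dx^n}D_{c+}^{\mu+\nu-n}f$. It then remains to show that this equals $D_{c+}^{\mu+\nu}f$. The key auxiliary fact is that differentiating a fractional integral raises its order by one: for $\mathrm{Re}(\lambda)<0$ one has $\tfrac{d}{dx}D_{c+}^{\lambda}f=D_{c+}^{\lambda+1}f$. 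When $\mathrm{Re}(\lambda)<-1$ this follows by differentiating under the integral sign (the boundary term at $y=x$ vanishes) together with the identity $\Gamma(-\lambda)=(-\lambda-1)\Gamma(-\lambda-1)$; when $-1\leq\mathrm{Re}(\lambda)<0$ it is simply Definition \ref{RLderivative} with its integer equal to $1$. Iterating this fact and matching against the definition of $D_{c+}^{\mu+\nu}$ (whose own integer $m\coloneqq\lfloor\mathrm{Re}(\mu+\nu)\rfloor+1$ satisfies $m\leq n$, because $\mathrm{Re}(\mu)<0$ forces $\mathrm{Re}(\mu+\nu)<\mathrm{Re}(\nu)$) then closes the argument.

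The main obstacle I anticipate is precisely this last reconciliation in the derivative case: the number $n$ of ordinary derivatives is dictated by $\nu$, whereas the target $D_{c+}^{\mu+\nu}f$ is defined using its own integer $m$, and in general $n\neq m$. The work is to verify that all the intermediate differintegration orders $\mu+\nu-n+j$ remain in the integral regime (negative real part) as $j$ increases, so that the order-raising lemma applies at each step without ever crossing into a regime where Definition \ref{RLderivative} would reintroduce an uncontrolled derivative. By contrast, the Fubini step is routine once existence is assumed and the Beta-integral evaluation is a standard computation; the real care lies in the bookkeeping of the floors and real parts.
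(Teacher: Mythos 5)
Your argument is correct and is essentially the proof the paper delegates to \cite[Chapter 2.3.2]{podlubny}: the Fubini interchange over the triangle followed by the Beta-integral evaluation is exactly the ``manipulation of double integrals'' the paper alludes to, and your extension to $\mathrm{Re}(\nu)\geq0$ via the order-raising identity $\tfrac{d}{dx}D_{c+}^{\lambda}f=D_{c+}^{\lambda+1}f$ correctly handles the floor-function bookkeeping that reconciles $n$ with the integer attached to $\mu+\nu$. No gaps; this matches the intended proof.
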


\begin{proof}
This is a simple exercise in manipulation of double integrals, and may be found in \cite[Chapter 2.3.2]{podlubny}.
\end{proof}

\begin{lemma}
\label{ComposeDerivatives}
If $n\in\mathbb{N}$ and $f$ is a $C^n$ function such that one of $D_{c+}^n\big(D_{c+}^{\mu}f(x)\big),\,D_{c+}^{n+\mu}f(x),\,D_{c+}^{\mu}\big(D_{c+}^nf(x)\big)$ exists, then all three exist and \[D_{c+}^n\big(D_{c+}^{\mu}f(x)\big)=D_{c+}^{n+\mu}f(x)=
D_{c+}^{\mu}\big(D_{c+}^nf(x)\big)+\sum_{k=1}^n\frac{(x-c)^{-\mu-k}}{\Gamma(-\mu-k+1)}f^{(n-k)}(c).\]
\end{lemma}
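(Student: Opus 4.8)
The plan is to reduce everything to a single integration-by-parts identity and then iterate. The first equality $D_{c+}^n\big(D_{c+}^{\mu}f\big)=D_{c+}^{n+\mu}f$ is immediate from Lemma \ref{ComposeIntegrals} applied with $\nu=n$, since $\mathrm{Re}(\mu)<0$; this also records that $D_{c+}^n\big(D_{c+}^{\mu}f\big)$ is by definition $\tfrac{d^n}{dx^n}\big(D_{c+}^{\mu}f\big)$. The real content is the second equality, which I would obtain by relating the fractional integral of $f^{(n)}$ to that of $f$ through repeated integration by parts.

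First I would establish the single-step identity: for $\mathrm{Re}(\mu)<0$ and $g\in C^1$ near $[c,x]$,
\[D_{c+}^{\mu}g(x)=D_{c+}^{\mu-1}g'(x)+\frac{(x-c)^{-\mu}}{\Gamma(1-\mu)}g(c).\]
This follows by writing $D_{c+}^{\mu}g(x)=\frac{1}{\Gamma(-\mu)}\int_c^x(x-y)^{-\mu-1}g(y)\,\mathrm{d}y$ and integrating by parts once, using the antiderivative $\tfrac{1}{\mu}(x-y)^{-\mu}$ of the kernel. The crucial point is that the boundary contribution at $y=x$ vanishes precisely because $\mathrm{Re}(-\mu)>0$, leaving only the term at $y=c$; the functional equation $\Gamma(1-\mu)=-\mu\Gamma(-\mu)$ then tidies up the constants. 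Next I would iterate this identity $n$ times, applying it successively with $\mu$ replaced by $\mu-j$ and $g$ by $f^{(j)}$ for $j=0,\dots,n-1$. Since $\mathrm{Re}(\mu-j)<0$ throughout, each step is legitimate and the boundary terms at $y=x$ keep vanishing, giving
\[D_{c+}^{\mu}f(x)=D_{c+}^{\mu-n}f^{(n)}(x)+\sum_{j=0}^{n-1}\frac{(x-c)^{j-\mu}}{\Gamma(j+1-\mu)}f^{(j)}(c).\]

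Applying $\tfrac{d^n}{dx^n}=D_{c+}^n$ to both sides and using Lemma \ref{ComposeIntegrals} twice — once to rewrite $D_{c+}^n\big(D_{c+}^{\mu}f\big)$ as $D_{c+}^{n+\mu}f$, and once to rewrite $D_{c+}^n\big(D_{c+}^{\mu-n}f^{(n)}\big)$ as $D_{c+}^{\mu}f^{(n)}=D_{c+}^{\mu}\big(D_{c+}^nf\big)$ — reduces the claim to an elementary computation. Differentiating each power $(x-c)^{j-\mu}$ exactly $n$ times produces the ratio $\Gamma(j+1-\mu)/\Gamma(j+1-\mu-n)$, which cancels the prefactor and leaves $\frac{(x-c)^{j-n-\mu}}{\Gamma(j+1-n-\mu)}f^{(j)}(c)$; reindexing by $k=n-j$ turns the sum into exactly $\sum_{k=1}^n\frac{(x-c)^{-\mu-k}}{\Gamma(-\mu-k+1)}f^{(n-k)}(c)$, as required.

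The step I expect to require the most care is the existence claim — that if any one of the three expressions exists then so do the others. I would handle this by observing that the finite sum of power functions above is smooth for $x\neq c$, so applying $\tfrac{d^n}{dx^n}$ to the displayed identity exhibits $D_{c+}^{n+\mu}f$ and $D_{c+}^{\mu}\big(D_{c+}^nf\big)$ as the $n$-fold derivatives of two expressions differing by a function smooth away from $c$; hence one of them exists precisely when the other does. Combined with the equality of the first two expressions from Lemma \ref{ComposeIntegrals}, this closes the loop. I would also verify that the $C^n$ hypothesis secures convergence of every integral involved: near $y=x$ the kernel $(x-y)^{-\mu-1}$ is integrable since $\mathrm{Re}(\mu)<0$, while near $y=c$ the continuity of the derivatives of $f$ suffices.
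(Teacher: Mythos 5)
For $\mathrm{Re}(\mu)<0$ your argument is correct and is essentially the paper's own proof written out explicitly: the paper likewise proceeds ``by induction on $n$\,\dots\,using integration by parts'', and your one-step identity $D_{c+}^{\mu}g=D_{c+}^{\mu-1}g'+\frac{(x-c)^{-\mu}}{\Gamma(1-\mu)}g(c)$, its $n$-fold iteration, and the final differentiation with the reindexing $k=n-j$ all check out.

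The genuine gap is that you treat $\mathrm{Re}(\mu)<0$ as a standing hypothesis (``\dots\,since $\mathrm{Re}(\mu)<0$''), whereas the statement of the lemma imposes no restriction on $\mu$, and the paper's proof explicitly adds a second step for the $\mathrm{Re}(\mu)\geq0$ case, obtained ``by performing ordinary differentiation on the previous case''. Your argument does not survive in that regime as written: the representation $D_{c+}^{\mu}g(x)=\frac{1}{\Gamma(-\mu)}\int_c^x(x-y)^{-\mu-1}g(y)\,\mathrm{d}y$ underlying your integration by parts is only the definition of $D_{c+}^{\mu}$ when $\mathrm{Re}(\mu)<0$ (for $\mathrm{Re}(\mu)\geq0$ one must go through Definition \ref{RLderivative}, and the kernel is no longer integrable at $y=x$, so the boundary term you discard would not even vanish); and Lemma \ref{ComposeIntegrals}, which you invoke for the first equality, likewise requires $\mathrm{Re}(\mu)<0$ for the inner operator. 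The repair is routine and is exactly the paper's second step: write $\mu=m+\mu'$ with $m=\lfloor\mathrm{Re}(\mu)\rfloor+1$ and $\mathrm{Re}(\mu')<0$, apply your identity with $\mu'$ in place of $\mu$, and differentiate the resulting equation $m$ further times, checking that the extra derivatives of the power-function sum reproduce the claimed coefficients. As it stands, though, your proof establishes only part of the statement the paper proves.
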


\begin{proof}
The first identity follows directly from Definition \ref{RLderivative} for Riemann--Liouville fractional derivatives. For the second, use induction on $n$, starting with the $\mathrm{Re}(\mu)<0$ case and using integration by parts, then proving the $\mathrm{Re}(\mu)\geq0$ case by performing ordinary differentiation on the previous case. A more detailed proof can be found in \cite[Chapter III]{miller}.
\end{proof}

Note that when $c$ is infinite and $f$ has sufficient decay conditions, the series term disappears. In this case, the Riemann--Liouville and Caputo fractional derivatives (Definitions \ref{RLderivative} and \ref{Caputo}) are equivalent. This fact will be used in Lemma \ref{SchwartzConvolution} below.

Another definition of fractional calculus involves generalising the relationship given by the Fourier transform between differentiation and multiplication by power functions. In fact, Lemma \ref{Fourier} shows that this model, commonly used in applications involving partial differential equations, is equivalent to the Riemann--Liouville model with $c=-\infty$. Similarly, Lemma \ref{Laplace} shows that the corresponding definition with Laplace transforms instead of Fourier is equivalent to the Riemann--Liouville model with $c=0$.

\begin{lemma}[Fourier transforms of fractional differintegrals]
\label{Fourier}
If $f(x)$ is a function with well-defined Fourier transform $\hat{f}(\lambda)$ and $\nu\in\mathbb{C}$ is such that $D_+^{\nu}f(x)$ is well-defined, then the Fourier transform of $D_+^{\nu}f(x)$ is $(-i\lambda)^{\nu}\hat{f}(\lambda)$.
\end{lemma}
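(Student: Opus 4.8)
\section*{Proof proposal}

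The plan is to reduce everything to the fundamental observation that, with $c=-\infty$, the Riemann--Liouville differintegral is a convolution, and then to invoke the convolution theorem for Fourier transforms. I would treat the integral case $\mathrm{Re}(\nu)<0$ first. Writing $\alpha\definedby-\nu$ with $\mathrm{Re}(\alpha)>0$, Definition \ref{RLintegral} becomes, after the substitution $t=x-y$,
\[
D_+^{\nu}f(x)=\frac{1}{\Gamma(\alpha)}\int_0^{\infty}t^{\alpha-1}f(x-t)\,\mathrm{d}t=(f*\Phi_{\alpha})(x),\qquad \Phi_{\alpha}(t)\definedby\frac{t_+^{\alpha-1}}{\Gamma(\alpha)},
\]
where $t_+$ denotes the positive part of $t$. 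Thus $\widehat{D_+^{\nu}f}=\hat{f}\cdot\hat{\Phi}_{\alpha}$ by the convolution theorem, and the whole problem collapses to computing the Fourier transform of the single kernel $\Phi_{\alpha}$.

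The heart of the argument is therefore the identity $\hat{\Phi}_{\alpha}(\lambda)=(-i\lambda)^{-\alpha}=(-i\lambda)^{\nu}$. Since the integral $\int_0^{\infty}t^{\alpha-1}e^{i\lambda t}\,\mathrm{d}t$ is only conditionally convergent (and not absolutely convergent for $\mathrm{Re}(\alpha)\le1$), I would introduce a convergence factor $e^{-\varepsilon t}$ with $\varepsilon>0$ and use the standard Gamma-integral evaluation
\[
\frac{1}{\Gamma(\alpha)}\int_0^{\infty}t^{\alpha-1}e^{-(\varepsilon-i\lambda)t}\,\mathrm{d}t=(\varepsilon-i\lambda)^{-\alpha},
\]
valid for $\mathrm{Re}(\varepsilon-i\lambda)>0$, and then let $\varepsilon\to0^+$ to obtain $(-i\lambda)^{-\alpha}$. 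The delicate point here is the branch of the power function: I must check that the limiting value is taken with the argument convention $\arg\in[0,2\pi)$ fixed earlier in the paper, so that the result is consistent across $\lambda>0$ and $\lambda<0$, and that it matches the familiar integer case $\widehat{f^{(n)}}=(-i\lambda)^n\hat{f}$ dictated by the paper's Fourier sign convention. I expect this branch bookkeeping, together with justifying the $\varepsilon\to0^+$ passage (either by dominated convergence against $\hat{f}$ or by interpreting the transforms distributionally), to be the main obstacle.

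Finally I would pass from the integral case to the full derivative case $\mathrm{Re}(\nu)\ge0$. The cleanest route is Definition \ref{RLderivative}: writing $D_+^{\nu}f=\tfrac{d^n}{dx^n}\big(D_+^{\nu-n}f\big)$ with $n=\lfloor\mathrm{Re}(\nu)\rfloor+1$, the inner differintegral has $\mathrm{Re}(\nu-n)<0$ and so transforms to $(-i\lambda)^{\nu-n}\hat{f}$ by the case already proved, after which the $n$ ordinary derivatives contribute a factor $(-i\lambda)^n$ under the Fourier transform, recombining to $(-i\lambda)^{\nu}\hat{f}$. Note that the boundary terms in Lemma \ref{ComposeDerivatives} vanish here because $c=-\infty$ and the decay of $f$ kills the series, which is exactly why the Riemann--Liouville derivative behaves so simply under the Fourier transform. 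Alternatively, one could observe that both sides are analytic in $\nu$ and agree on the half-plane $\mathrm{Re}(\nu)<0$, invoking analytic continuation to extend the identity to all admissible $\nu$.
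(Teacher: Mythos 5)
Your proposal follows essentially the same route as the paper: write the fractional integral as the convolution $f\ast\Phi$ with $\Phi(t)=t_+^{-\nu-1}/\Gamma(-\nu)$, apply the convolution theorem, and reduce the derivative case $\mathrm{Re}(\nu)\geq0$ to the integral case plus the standard integer-order result. The only difference is that you carry out the computation of $\hat{\Phi}$ (via the regularised Gamma integral and the $\varepsilon\to0^+$ limit) that the paper leaves implicit with ``the result follows,'' which is a worthwhile piece of bookkeeping but not a different argument.
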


\begin{proof}
If $\mathrm{Re}(\nu)<0$, then Definition \ref{RLintegral} can be rewritten as a convolution: $D_+^{\nu}f=f\ast\Phi$ where $\Phi(x)=\frac{x^{-\nu-1}}{\Gamma(-\nu)}$ when $x>0$, $\Phi(x)=0$ otherwise. Convolutions transform to products under the Fourier transform, so the result follows.

If $\mathrm{Re}(\nu)\geq0$, the result follows from the fractional integral case (proved above) and the $\nu\in\mathbb{N}$ case (which is standard).
\end{proof}

\begin{lemma}[Laplace transforms of fractional integrals]
\label{Laplace}
If $f(x)$ is a function with well-defined Laplace transform $\tilde{f}(\lambda)$, and $\nu\in\mathbb{C}$ with $\mathrm{Re}(\nu)<0$ is such that $D^{\nu}_{0+}f(x)$ is well-defined, then the Laplace transform of $D^{\nu}_{0+}f(x)$ is $(-i\lambda)^{\nu}\tilde{f}(\lambda)$.
\end{lemma}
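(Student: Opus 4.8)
The plan is to mirror the proof of Lemma \ref{Fourier} as closely as possible, exploiting the fact that the Laplace transform, like the Fourier transform, converts convolution into multiplication. First I would observe that since $\mathrm{Re}(\nu)<0$, Definition \ref{RLintegral} with $c=0$ expresses $D_{0+}^{\nu}f$ as a one-sided convolution $D_{0+}^{\nu}f=f\ast\Phi$, where $\Phi(x)=\frac{x^{-\nu-1}}{\Gamma(-\nu)}$ for $x>0$ and $\Phi(x)=0$ otherwise — exactly the kernel appearing in the Fourier proof. Because both $f$ and $\Phi$ are supported on $[0,\infty)$, this is precisely the convolution adapted to the Laplace transform, so that $\mathcal{L}[D_{0+}^{\nu}f](\lambda)=\mathcal{L}[f](\lambda)\cdot\mathcal{L}[\Phi](\lambda)=\tilde{f}(\lambda)\,\mathcal{L}[\Phi](\lambda)$.

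The core computation is then to evaluate $\mathcal{L}[\Phi]$. Writing $\alpha\coloneqq-\nu$ so that $\mathrm{Re}(\alpha)>0$, this reduces to the standard Euler/Gamma integral
\[\mathcal{L}[\Phi](\lambda)=\frac{1}{\Gamma(\alpha)}\int_0^\infty x^{\alpha-1}e^{i\lambda x}\,\mathrm{d}x=(-i\lambda)^{\nu},\]
obtained from the identity $\int_0^\infty x^{\alpha-1}e^{-sx}\,\mathrm{d}x=\Gamma(\alpha)\,s^{-\alpha}$ by setting $s=-i\lambda$, i.e.\ by treating the Laplace transform as the restriction of the Fourier transform to the half-line $[0,\infty)$. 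Multiplying through by $\tilde{f}(\lambda)$ then gives the claimed multiplier $(-i\lambda)^{\nu}$, and the statement follows at once.

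Two points require care. Convergence of the integral at the origin is guaranteed exactly by the hypothesis $\mathrm{Re}(\nu)<0$ (i.e.\ $\mathrm{Re}(\alpha)>0$), which explains why — unlike Lemma \ref{Fourier} — the statement is restricted to the integral case: the Laplace transform of a genuine fractional \emph{derivative} would additionally pick up the boundary terms seen in Lemma \ref{ComposeDerivatives}. The step I expect to be the main obstacle is fixing the correct branch of $s^{-\alpha}=(-i\lambda)^{\nu}$: the Gamma identity holds for $\mathrm{Re}(s)>0$ with the principal branch, and one must check that the analytic continuation to $s=-i\lambda$ is consistent with the branch convention $\arg(x-c)\in[0,2\pi)$ fixed earlier in the paper, so that the resulting power agrees with the one appearing in Lemma \ref{Fourier}. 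Once the branch is pinned down, the remainder is routine, and the parallel with the Fourier case makes the final statement immediate.
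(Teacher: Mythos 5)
Your proposal is correct and takes essentially the same route as the paper: the paper's own proof consists of the single line ``As for Lemma \ref{Fourier}'', i.e.\ precisely the convolution-to-product argument you spell out, with the kernel $\Phi(x)=x^{-\nu-1}/\Gamma(-\nu)$ supported on $[0,\infty)$ and the Gamma-integral evaluation of its transform. Your extra attention to the branch of $(-i\lambda)^{\nu}$ and to why the statement is restricted to the integral case is a sensible elaboration, not a departure.
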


\begin{proof}
As for Lemma \ref{Fourier}. See also \cite[Chapter III]{miller}.
\end{proof}

The corresponding result for Laplace transforms of fractional \textit{derivatives} is more complicated, because of the initial values terms arising. It may be found in \cite[Chapter IV]{miller}.

Finally, we demonstrate one way, due to Osler, in which the product rule -- another basic result of classical calculus -- can be extended to Riemann--Liouville fractional calculus.

\begin{lemma}[The fractional product rule]
\label{Osler}
Let $u$ and $v$ be complex functions such that $u(x)$, $v(x)$, and $u(x)v(x)$ are all functions of the form $x^{\lambda}\eta(x)$ with $\mathrm{Re}(\lambda)>-1$ and $\eta$ analytic on a domain $R\subset\mathbb{C}$. Then for any distinct $x,c\in R$ and any $\nu\in\mathbb{C}$, we have \[D_{c+}^{\nu}\big(u(x)v(x)\big)=\sum_{n=0}^{\infty}\tbinom{\nu}{n}D_{c+}^{\nu-n}u(x)D_{c+}^{n}v(x),\] where all differintegrals are defined using the Cauchy formula.
\end{lemma}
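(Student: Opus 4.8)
The plan is to work directly from the Cauchy formula (Definition \ref{Cauchy}) for $D_{c+}^{\nu}(uv)$ and expand the factor $v$ in a Taylor series about the point $x$. This turns the left-hand side into a sum of Cauchy integrals, each of which can be recognised as a fractional differintegral of $u$, while the coefficients assemble into the integer-order derivatives of $v$ and the fractional binomial coefficients. Concretely, I would begin with
\[D_{c+}^{\nu}\big(u(x)v(x)\big) = \frac{\Gamma(\nu+1)}{2\pi i}\int_{\mathcal{H}}(y-x)^{-\nu-1}u(y)v(y)\,\mathrm{d}y.\]
Since $v$ has the form $x^{\lambda}\eta(x)$ with $\eta$ analytic on $R$, it is holomorphic in a neighbourhood of $x$ (the only obstruction being the branch point of the power factor), so there it admits the Taylor expansion $v(y)=\sum_{n=0}^{\infty}\frac{v^{(n)}(x)}{n!}(y-x)^{n}$.

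Substituting this series and interchanging summation and integration gives
\[D_{c+}^{\nu}\big(u(x)v(x)\big) = \sum_{n=0}^{\infty}\frac{v^{(n)}(x)}{n!}\cdot\frac{\Gamma(\nu+1)}{2\pi i}\int_{\mathcal{H}}(y-x)^{-(\nu-n)-1}u(y)\,\mathrm{d}y.\]
Comparing the integral with the Cauchy formula for the $(\nu-n)$th differintegral of $u$, each term is recognised as
\[\frac{\Gamma(\nu+1)}{2\pi i}\int_{\mathcal{H}}(y-x)^{-(\nu-n)-1}u(y)\,\mathrm{d}y = \frac{\Gamma(\nu+1)}{\Gamma(\nu-n+1)}D_{c+}^{\nu-n}u(x).\]

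To finish I would invoke two identities. First, $\frac{\Gamma(\nu+1)}{n!\,\Gamma(\nu-n+1)}=\tbinom{\nu}{n}$ directly from the definition of the generalised binomial coefficient. Second, for a nonnegative integer $n$ the Cauchy differintegral reduces to the ordinary derivative: since $(y-x)^{-n-1}$ is single-valued, the outgoing and returning rays of the Hankel contour $\mathcal{H}$ cancel, leaving only the loop around $x$, whence $D_{c+}^{n}v(x)=v^{(n)}(x)$ by Cauchy's integral formula for derivatives. Replacing $v^{(n)}(x)$ by $D_{c+}^{n}v(x)$ and collecting the Gamma factors into $\tbinom{\nu}{n}$ then yields exactly $\sum_{n=0}^{\infty}\tbinom{\nu}{n}D_{c+}^{\nu-n}u(x)\,D_{c+}^{n}v(x)$, as claimed.

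The main obstacle is the justification of the term-by-term integration. The Taylor series of $v$ converges only inside the disc about $x$ extending to the branch point of $v$, which in the standard case $c=0$ lies precisely at the two endpoints of the contour $\mathcal{H}$; thus convergence on the contour degenerates exactly at its ends, and a naive appeal to uniform convergence is unavailable. This is where the hypotheses do their work: the prescribed form $x^{\lambda}\eta(x)$ with $\mathrm{Re}(\lambda)>-1$ guarantees that the integrand remains absolutely integrable up to the branch point, so the interchange can be validated by a dominated-convergence argument on contours contracted toward $c$, while the requirement that $uv$ be of the same form ensures that the target series and all the differintegrals $D_{c+}^{\nu-n}u$ are themselves well-defined.
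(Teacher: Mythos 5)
The paper itself offers no proof of this lemma --- it simply cites Osler's 1971 \emph{Monthly} paper --- and your argument is essentially a reconstruction of Osler's original proof: expand $v$ in its Taylor series about $x$ inside the Cauchy--Hankel integral, integrate term by term, identify each resulting integral as $\frac{\Gamma(\nu+1)}{\Gamma(\nu-n+1)}D^{\nu-n}_{c+}u(x)$, and absorb the factor $\frac{\Gamma(\nu+1)}{n!\,\Gamma(\nu-n+1)}=\binom{\nu}{n}$ while recognising $v^{(n)}(x)=D^{n}_{c+}v(x)$ via the collapse of the Hankel contour to a loop. All of this formal algebra is correct, and you have correctly located the crux of the matter in the interchange of summation and integration, since the contour endpoints at $c$ sit exactly on the circle of convergence of the Taylor series of $v$.

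Where the proposal falls short of a proof is precisely at that flagged step. Your justification --- that the limit integrand is absolutely integrable up to the branch point, so dominated convergence on contours contracted toward $c$ applies --- is not yet an argument: dominated convergence requires a single integrable dominant for all the \emph{partial sums} $\sum_{n\le N}\frac{v^{(n)}(x)}{n!}(y-x)^n$ weighted by $(y-x)^{-\nu-1}u(y)$, and near the endpoints these partial sums need not be uniformly bounded (for $v\sim y^{\lambda}\eta(y)$ with $-1<\mathrm{Re}(\lambda)\le 0$ the Taylor series genuinely diverges on its circle of convergence, and controlling $\sup_N$ of the partial sums there requires exactly the kind of quantitative estimate you have not supplied --- it is here, incidentally, that the hypothesis that $uv$ is also of the form $x^{\lambda}\eta(x)$ with $\mathrm{Re}(\lambda)>-1$ must enter). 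The standard way to close this, and the one Osler uses, is to keep the expansion finite, $v(y)=\sum_{n=0}^{N}\frac{v^{(n)}(x)}{n!}(y-x)^n+R_N(y)$, represent $R_N$ by its own Cauchy integral, and show directly that $\int_{\mathcal{H}}(y-x)^{-\nu-1}u(y)R_N(y)\,\mathrm{d}y\to 0$ as $N\to\infty$; this simultaneously establishes convergence of the right-hand series. So the outline is the right one, but the convergence step needs the remainder estimate (or equivalent uniform partial-sum bounds) rather than a bare appeal to dominated convergence.
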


\begin{proof}
See \cite{osler3}.
\end{proof}

Partial differential equations (PDEs) of fractional order have also become an important area of study, with entire textbooks written about them and their applications \cite{kilbas,podlubny}. A huge variety of methods have been devised for solving them, including by extending known results of classical calculus: see for example \cite{podlubny2,yang,bin,baleanu} among many others. The non-locality of fractional derivatives lends them utility in many real-life problems, e.g. in control theory, dynamical systems, and elasticity theory \cite{baleanu2,luchko,tarasov}.

The \textbf{elliptic regularity theorem} is an important result in the theory of partial differential equations. In its most general form, it says that for any PDE satisfying certain conditions, there are regularity properties of the solution function which depend naturally on the regularity properties of the forcing function. This is useful in cases where the solution function cannot be constructed explicitly: more information about its essential properties is the next best thing to an analytic solution.

Here we shall focus on the version of the elliptic regularity theorem given in Theorem \ref{ERT}, in which the PDE must be linear and elliptic with constant coefficients, and `regularity' is defined in terms of Sobolev spaces.

\begin{definition}
\label{Sobolev}
For any real number $s$ and any natural number $n$, the $s$th Sobolev space on $\mathbb{R}^n$ is defined to be \[H^s(\mathbb{R}^n)\coloneqq\{u\in\mathcal{S}'(\mathbb{R}^n):\hat{u}\in L^2_{loc}(\mathbb{R}^n),||u||_{H^s}<\infty\},\] where the Sobolev norm $||\cdot||_{H^s}$ is defined by \[||u||_{H^s}\coloneqq\left(\int_{\mathbb{R}^n}|\hat{u}(\lambda)|^2\left(1+|\lambda|^2\right)^s\,\mathrm{d}\lambda\right)^{1/2}.\] For a general domain $X\subset\mathbb{R}^n$, the $s$th Sobolev space on $X$ is defined to be \[H^s_{loc}(X)\coloneqq\{u\in\mathcal{D}'(X):u\phi\in H^s(\mathbb{R}^n)\,\text{ for all }\,\phi\in\mathcal{D}(X)\}.\]
\end{definition}

\begin{theorem}[Elliptic regularity theorem]
\label{ERT}
Let $P(D)$ be an elliptic partial differential operator given by a complex $n$-variable $N$th-order polynomial $P$ applied to the differential operator $D\coloneqq-i\frac{\partial}{\partial x}$ where $x$ is a variable in $\mathbb{R}^n$. If $X$ is a domain in $\mathbb{R}^n$ and $u,f\in\mathcal{D}'(X)$ satisfy $P(D)u=f$, then \[f\in H^s_{loc}(X)\Rightarrow u\in H^{s+N}_{loc}(X).\]
\end{theorem}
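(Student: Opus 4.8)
The plan is to pass to the Fourier side, where the constant-coefficient operator $P(D)$ becomes multiplication by its symbol, and to exploit ellipticity in the form of a lower bound on that symbol. With the convention $D=-i\frac{\partial}{\partial x}$ (chosen precisely so that the Fourier transform sends $P(D)$ to multiplication by the polynomial symbol $P(\lambda)$, cf. Lemma \ref{Fourier}), the equation $P(D)u=f$ transforms to $P(\lambda)\hat{u}(\lambda)=\hat{f}(\lambda)$. Writing $P=P_N+Q$ where $P_N$ is the principal (top-degree homogeneous) part and $\deg Q<N$, ellipticity means $P_N(\xi)\neq0$ for $\xi\in\mathbb{R}^n\setminus\{0\}$; since $P_N$ is homogeneous of degree $N$ and continuous, compactness of the unit sphere gives $|P_N(\lambda)|\geq c_0|\lambda|^N$. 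As $|Q(\lambda)|$ grows only like $|\lambda|^{N-1}$, the principal part dominates at high frequency, and I would deduce that there exist constants $c>0$ and $R>0$ with \[|P(\lambda)|\geq c\left(1+|\lambda|^2\right)^{N/2}\qquad\text{for }|\lambda|\geq R.\]

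This symbol bound yields at once a \emph{global} version of the theorem. Suppose $v$ is a compactly supported distribution (so that $\hat{v}$ is smooth, hence locally $L^2$, by Paley--Wiener) with $P(D)v=g\in H^r(\mathbb{R}^n)$. Splitting the Sobolev integral at $|\lambda|=R$, on the high-frequency region I would use $\hat{v}=\hat{g}/P(\lambda)$ together with the bound above to estimate \[\int_{|\lambda|\geq R}|\hat{v}(\lambda)|^2\left(1+|\lambda|^2\right)^{r+N}\,\mathrm{d}\lambda\leq c^{-2}\int_{|\lambda|\geq R}|\hat{g}(\lambda)|^2\left(1+|\lambda|^2\right)^{r}\,\mathrm{d}\lambda\leq c^{-2}\|g\|_{H^r}^2,\] while on the bounded low-frequency region $|\lambda|<R$ the weight $\left(1+|\lambda|^2\right)^{r+N}$ is bounded and local square-integrability of $\hat{v}$ controls the integral. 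Hence $v\in H^{r+N}(\mathbb{R}^n)$, a gain of exactly $N$ orders.

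It remains to pass from this global statement to the local conclusion, which is where the real work lies. Given $\phi\in\mathcal{D}(X)$, I would compute $P(D)(\phi u)=\phi f+[P(D),\phi]u$, where the commutator $[P(D),\phi]$ is a differential operator of order at most $N-1$ (the order-$N$ terms cancel by Leibniz) whose coefficients are supported in $\operatorname{supp}\phi$. Choosing an auxiliary $\psi\in\mathcal{D}(X)$ equal to $1$ near $\operatorname{supp}\phi$, I can replace $u$ by $\psi u$ in the commutator term. Since any distribution in $\mathcal{D}'(X)$ is locally of finite order, $\psi u$ lies in some (possibly very negative) Sobolev space $H^t(\mathbb{R}^n)$, which provides the starting point for a bootstrap. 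If $u\in H^t_{loc}(X)$, then $[P(D),\phi](\psi u)\in H^{t-N+1}(\mathbb{R}^n)$ and $\phi f\in H^s(\mathbb{R}^n)$, so $P(D)(\phi u)\in H^{\min(s,\,t-N+1)}(\mathbb{R}^n)$; feeding this into the global estimate gives $\phi u\in H^{\min(s+N,\,t+1)}(\mathbb{R}^n)$, that is, $u\in H^{\min(s+N,\,t+1)}_{loc}(X)$. Iterating raises the local regularity of $u$ by one order at each step until it saturates at $s+N$, yielding $u\in H^{s+N}_{loc}(X)$.

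I expect the main obstacle to be the localization argument rather than the symbol estimate. One must verify carefully that the commutator genuinely has order $N-1$, so that each bootstrap step gains a full $N$ orders relative to the forcing while surrendering only one order to the commutator and thereby achieving a net gain of one; confirm that the iteration terminates after finitely many steps; and keep track of the nested cutoffs $\phi,\psi$ so that the finite-order starting bound and the global estimate are available uniformly on the relevant neighbourhoods.
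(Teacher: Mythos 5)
Your proposal is correct and follows essentially the same route as the paper: the paper itself only cites \cite[Chapter 9]{folland2} for this classical statement, but the argument it adapts in Section \ref{sec:main} for the fractional analogue is exactly your strategy --- an ellipticity lower bound $|P(\lambda)|\geq c(1+|\lambda|^2)^{N/2}$ for large $|\lambda|$ (Lemma \ref{bounds}), a global gain of $N$ orders for compactly supported distributions (done there via a parametrix $\hat{E}=(1-\chi)/P$, which is just a tidier packaging of your direct division with a low-frequency cutoff), an order-$(N-1)$ commutator estimate (Lemma \ref{SobolevConvolution}), and the nested-cutoff bootstrap terminating after $\lceil s+N-t\rceil+1$ steps. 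The only caveats you flag --- verifying the commutator order and tracking the nested supports $\psi_0,\dots,\psi_m$ --- are precisely the points the paper handles explicitly, so nothing essential is missing.
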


\begin{proof}
See \cite[Chapter 9]{folland2}.
\end{proof}

Related, more general, results are already known from the theory of pseudodifferential operators; see e.g. \cite[Theorem 7.13]{abels} for an example of an elliptic regularity theorem in this setting. However, it is not necessary to introduce the full machinery of pseudodifferential operators -- with associated stronger conditions on the forcing and solution functions -- in order to obtain a useful analogue of Theorem \ref{ERT} for fractional differential equations.

The structure of this paper is as follows. In Section \ref{sec:main}, the bootstrapping proof used in \cite{folland2} to prove Theorem \ref{ERT} is adapted, with some modifications and extra lemmas, to prove an elegant analogous result in the Riemann--Liouville fractional model. The place where most new work was needed was in the proof of Lemmas \ref{SobolevConvolution} and \ref{SchwartzConvolution}; the final result is Theorem \ref{FERT}. In Section \ref{sec:concl}, we consider applications and potential extensions of our work here.

\section{The main result}
\label{sec:main}
Let $x\in\mathbb{R}^n$ be an $n$-dimensional variable, and let $D$ denote the modified $n$-dimensional differential operator $-iD_+$ where $D_+$ is the vector operation of differentiation with respect to $x$ defined in Definitions \ref{RLintegral}--\ref{RLderivative}. In other words, the differential operator $D^{\alpha}$ is defined by \[D^{\alpha}f(x)=e^{-i\pi\alpha/2}D_{+}^{\alpha}f(x)\]We use the constant of differintegration $c=-\infty$ so that we can make use of Fourier transforms in the proof (by Lemma \ref{Fourier}), and also so that the Riemann--Liouville and Caputo fractional derivatives are equal (by the discussion following Lemma \ref{ComposeDerivatives}), which is required at a certain stage in the proof.

Let $P$ be a finite linear combination of power functions, i.e. \[P(\lambda)=\sum_{\alpha}c_{\alpha}\lambda^{\alpha},\] where $\alpha$ is a fractional multi-index in $(\mathbb{R}^+_0)^n$ and the sum is finite. This defines a fractional differential operator $P(D)$, where all powers of $D$ are defined using the Riemann--Liouville formula (Definition \ref{RLderivative}) with $c=-\infty$. The fractional partial differential equation we shall be considering is of the form \[P(D)u=f.\]

\begin{definition}
\label{order}
The \textbf{order} $\nu$ of the operator $P(D)$ defined above is the maximal $|\alpha|$ such that $c_{\alpha}\neq0$. Note that $\nu$ is not necessarily an integer, and that since $P$ is a finite sum, there exists $\epsilon>0$ such that $|\alpha|\leq\nu-\epsilon$ for every $\alpha$ such that $c_{\alpha}\neq0$ and $|\alpha|<\nu$.
\end{definition}

\begin{definition}
\label{elliptic}
The \textbf{principal symbol} of $P(D)$ is defined to be the function $\sigma_P(\lambda)=\sum_{|\alpha|=\nu}c_{\alpha}\lambda^{\alpha}$. The operator $P(D)$ is said to be \textbf{elliptic} if $\sigma_P(\lambda)\neq0$ for all nonzero $\lambda\in\mathbb{R}^n$.
\end{definition}

\begin{lemma}
\label{bounds}
If $P(D)$ is a $\nu$th-order elliptic fractional partial differential operator as above, then there exist positive real constants $C,R$ such that for any $\lambda\in\mathbb{C}^n$ with $|\lambda|>R$, the function $P$ satisfies $|P(\lambda)|\geq C(1+|\lambda|^2)^{\nu/2}$.
\end{lemma}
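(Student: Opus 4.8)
The plan is to isolate the top-order part of the symbol, bound the remainder by a strictly smaller power of $|\lambda|$, and reduce the top-order estimate to the unit sphere in $\mathbb{C}^n$. Write $P(\lambda)=\sigma_P(\lambda)+Q(\lambda)$, where $\sigma_P(\lambda)=\sum_{|\alpha|=\nu}c_\alpha\lambda^\alpha$ is the principal symbol of Definition \ref{elliptic} and $Q(\lambda)=\sum_{|\alpha|<\nu}c_\alpha\lambda^\alpha$ gathers the lower-order monomials. By Definition \ref{order} there is an $\epsilon>0$ with $|\alpha|\leq\nu-\epsilon$ for every monomial of $Q$, and since $|\lambda^\alpha|=\prod_j|\lambda_j|^{\alpha_j}\leq|\lambda|^{|\alpha|}$ for every $\lambda\in\mathbb{C}^n$ (each $|\lambda_j|\leq|\lambda|$ and each $\alpha_j\geq 0$), summing the finitely many terms yields a constant $C'$ with $|Q(\lambda)|\leq C'|\lambda|^{\nu-\epsilon}$ for $|\lambda|\geq 1$. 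Thus the remainder is harmless, being dominated by a power strictly below $\nu$.

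Everything then rests on a lower bound $|\sigma_P(\lambda)|\geq m|\lambda|^\nu$ for some $m>0$ and all $\lambda\in\mathbb{C}^n$. Here I would use that $\sigma_P$ is homogeneous of degree $\nu$ under positive real scaling: for $t>0$ real and $\lambda\in\mathbb{C}^n$ one has $\arg(t\lambda_j)=\arg(\lambda_j)$, so no branch of any $\lambda_j^{\alpha_j}$ is crossed and $\sigma_P(t\lambda)=t^\nu\sigma_P(\lambda)$. Writing $\lambda=|\lambda|\,\omega$ with $\omega$ on the complex unit sphere $S=\{\omega\in\mathbb{C}^n:|\omega|=1\}$ then gives $|\sigma_P(\lambda)|=|\lambda|^\nu\,|\sigma_P(\omega)|$, so it suffices to bound $|\sigma_P|$ below by a positive $m$ on $S$. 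Granting such an $m$, the two estimates combine to $|P(\lambda)|\geq m|\lambda|^\nu-C'|\lambda|^{\nu-\epsilon}\geq\tfrac{m}{2}|\lambda|^\nu$ once $|\lambda|>R$ with $R$ large, and the elementary comparison $|\lambda|^\nu\geq c\,(1+|\lambda|^2)^{\nu/2}$ for large $|\lambda|$ then delivers the stated inequality with a suitable constant $C$.

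The step I expect to be the main obstacle is precisely the positive lower bound for $|\sigma_P|$ on the \emph{full} complex sphere $S$. The ellipticity hypothesis of Definition \ref{elliptic} is a condition on real directions only, giving $\sigma_P(\omega)\neq 0$ for $\omega\in S\cap\mathbb{R}^n$; on that real sub-sphere, which is compact and on which $\sigma_P$ is continuous, the minimum is attained and strictly positive, and this is where the classical argument runs unobstructed. Off the reals the difficulty is twofold: the functions $\lambda_j^{\alpha_j}$ built from the branch $\arg\in[0,2\pi)$ are discontinuous where some $\omega_j$ meets the positive real axis, so one cannot blindly invoke continuity on all of $S$; and ellipticity alone does not obviously exclude complex zeros of $\sigma_P$. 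My intended route is to argue outward from $S\cap\mathbb{R}^n$ — controlling $\sigma_P$ by continuity on a complex conic neighbourhood of the real directions and handling the branch-cut locus separately — and it is this extension of the non-vanishing from real to complex directions, using the explicit branch structure of the power functions, that is the genuinely delicate heart of the lemma and the one point at which the classical real-variable reasoning does not transfer automatically.
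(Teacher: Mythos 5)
Your completed steps --- the splitting $P=\sigma_P+Q$, the bound $|Q(\lambda)|\le C'|\lambda|^{\nu-\epsilon}$ from the gap $\epsilon$ of Definition \ref{order}, and the homogeneity reduction $|\sigma_P(\lambda)|=|\lambda|^{\nu}|\sigma_P(\omega)|$ --- are exactly the skeleton of the paper's proof. But the proposal has a genuine gap precisely where you flagged it: the positive lower bound for $|\sigma_P|$ on the full complex unit sphere is never proved, only announced as an ``intended route'' (a conic neighbourhood of the real directions plus separate treatment of the branch-cut locus). Moreover, that route cannot succeed, because the claim it is aiming at is false: ellipticity constrains only real directions and does not exclude complex zeros of the principal symbol. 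Take $n=2$ and $P(\lambda)=\sigma_P(\lambda)=\lambda_1^2+\lambda_2^2$ (the Laplacian, admissible in this framework with integer exponents): it is elliptic, yet $\sigma_P\bigl(\tfrac{1}{\sqrt{2}}(1,i)\bigr)=0$, and by positive homogeneity $P$ vanishes on the whole complex ray $t(1,i)$, $t>0$, so no estimate $|P(\lambda)|\ge C(1+|\lambda|^2)^{\nu/2}$ can hold for all complex $\lambda$ with $|\lambda|>R$. The lemma as stated with $\lambda\in\mathbb{C}^n$ is therefore unprovable; what is true, and what is all that is used later (in Lemma \ref{parametrices} the variable $\lambda$ is the real Fourier variable), is the estimate for $\lambda\in\mathbb{R}^n$.

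Once you restrict to real $\lambda$, your own argument closes without the delicate step you feared. On $S\cap\mathbb{R}^n$ each factor $\lambda_j^{\alpha_j}$, taken with the branch $\arg\in[0,2\pi)$, is continuous: it equals $|\lambda_j|^{\alpha_j}$ for $\lambda_j>0$ and $|\lambda_j|^{\alpha_j}e^{i\pi\alpha_j}$ for $\lambda_j<0$, both tending to $0$ as $\lambda_j\to 0$ when $\alpha_j>0$; the branch discontinuity occurs only when the positive real axis is approached from the lower half of the complex $\lambda_j$-plane, which never happens along real directions. Hence $|\sigma_P|$ is continuous and positive on the compact real sphere, attains a positive minimum $m$ there, and your chain
\begin{equation*}
|P(\lambda)|\;\ge\; m|\lambda|^{\nu}-C'|\lambda|^{\nu-\epsilon}\;\ge\;\tfrac{m}{2}|\lambda|^{\nu}\;\ge\; C(1+|\lambda|^2)^{\nu/2}\qquad(|\lambda|>R,\ \lambda\in\mathbb{R}^n)
\end{equation*}
finishes the proof. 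For comparison, the paper addresses the continuity worry differently --- it approximates $\sigma_P$ by a polynomial in $\lambda^{1/m}$ with rational exponents, applies the classical compactness argument to that approximation, and then lets the exponents converge --- but it, too, asserts the conclusion for $\lambda\in\mathbb{C}^n$ and is open to the very objection you raised. So your diagnosis of where the real difficulty lies is sharper than the paper's treatment; the honest resolution is not to extend the bound to complex directions but to state and use the lemma on $\mathbb{R}^n$, where your argument is complete.
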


\begin{proof}
First consider the non-fractional case, i.e. where $P$ is a polynomial. Here $|\sigma_P|$ is a continuous positive function on the compact domain $|\lambda|=1$, so it has a positive lower bound on this domain. In other words, $|\sigma_P(\lambda)|\gg1$ when $|\lambda|=1$, which implies $|\sigma_P(\lambda)|\gg|\lambda|^{\nu}$ for all $\lambda$. By the triangle inequality, this implies 

\begin{equation}
|P(\lambda)|\gg\Big(1-\frac{|P(\lambda)-\sigma_P(\lambda)|}{|\lambda|^{\nu}}\Big)|\lambda|^{\nu}.
\end{equation}

\noindent Since $P(\lambda)-\sigma_P(\lambda)$ is a polynomial of order less than $\nu$, the ratio term is $\ll1$ when $|\lambda|$ is sufficiently large. So for large $|\lambda|$ we have $|P(\lambda)|\gg|\lambda|^{\nu}\gg(1+|\lambda|^2)^{\nu/2}$ as required.

The above proof relies on the continuity of the function $\sigma_P(\lambda)$, which is not true in general since $\lambda^{\alpha}$ has a branch cut in the complex $\lambda$-plane when $\alpha$ is not an integer. But $\sigma_P(\lambda)$ can be approximated arbitrarily closely by a sum of \textit{rational} powers of $\lambda$, i.e. a polynomial of order around $m\nu$ in $\lambda^{1/m}$ for some large natural number $m$. Call this function $\tilde{\sigma}_P(\lambda)$; the above proof shows that $|\tilde{\sigma}_P(\lambda)|\gg1$ when $|\lambda^{1/m}|=1$, i.e. when $|\lambda|=1$. Now by letting the exponents in $\tilde{\sigma}_P$ tend to those in $\sigma_P$, we find $|\sigma_P(\lambda)|\gg1$ when $|\lambda|=1$, as before. Again this gives equation (1).

Because of the finite bound $\epsilon$ mentioned in Definition \ref{order}, the ratio term $\frac{|P(\lambda)-\sigma_P(\lambda)|}{|\lambda|^{\nu}}$ is still $\ll1$ for sufficiently large $\lambda$, and the result follows.
\end{proof}

\begin{lemma}[Existence of parametrices]
\label{parametrices}
If P(D) is an elliptic fractional partial differential operator as above, then it has a \textit{parametrix}, i.e. $E\in\mathcal{D}'(\mathbb{R}^n)$ such that $P(D)E=\delta_0+\omega$ for some $\omega\in\mathcal{E}(\mathbb{R}^n)$, and the parametrix $E$ is in $\mathcal{S}'(\mathbb{R}^n)$ and also in $C^{\infty}(\mathbb{R}^n\backslash\{0\})$.
\end{lemma}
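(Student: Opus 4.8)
The plan is to imitate the classical parametrix construction for elliptic operators, working entirely on the Fourier side and exploiting the lower bound of Lemma \ref{bounds}. Because $D^{\alpha}$ is defined (via Lemma \ref{Fourier} together with the factor $e^{-i\pi\alpha/2}$) precisely so as to transform into multiplication by $\lambda^{\alpha}$, the operator $P(D)$ corresponds under the Fourier transform to multiplication by the symbol $P(\lambda)$. I would therefore build $E$ by prescribing $\hat{E}$. First I would fix the constants $C,R$ of Lemma \ref{bounds}, so that $P(\lambda)\neq0$ and $|P(\lambda)|\geq C(1+|\lambda|^2)^{\nu/2}$ whenever $|\lambda|>R$, choose a cutoff $\psi\in\mathcal{D}(\mathbb{R}^n)$ with $\psi\equiv1$ on $\{|\lambda|\leq R\}$ and $\mathrm{supp}\,\psi\subset\{|\lambda|\leq2R\}$, and set \[\hat{E}(\lambda)\coloneqq\frac{1-\psi(\lambda)}{P(\lambda)}.\] The numerator is supported in $\{|\lambda|\geq R\}$, where $P$ is nonvanishing, so $\hat{E}$ is a well-defined bounded continuous function with $|\hat{E}(\lambda)|\lesssim(1+|\lambda|^2)^{-\nu/2}$; in particular $\hat{E}$ is tempered and $E\coloneqq\mathcal{F}^{-1}\hat{E}\in\mathcal{S}'(\mathbb{R}^n)$.

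The parametrix identity then follows immediately. Since $P(\lambda)\hat{E}(\lambda)=1-\psi(\lambda)$, applying $\mathcal{F}^{-1}$ yields $P(D)E=\delta_0+\omega$ with $\omega\coloneqq-\mathcal{F}^{-1}\psi$. As $\psi\in\mathcal{D}\subset\mathcal{S}$, its inverse transform $\omega$ lies in $\mathcal{S}\subset\mathcal{E}(\mathbb{R}^n)$, which supplies the required smooth error term. Note that this part of the argument is insensitive to the fractional nature of $P$, since $\psi$ is an arbitrary cutoff chosen independently of the symbol.

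The remaining and genuinely delicate claim is $E\in C^{\infty}(\mathbb{R}^n\setminus\{0\})$. Here I would use the standard device of trading decay of $\hat{E}$ for regularity of $E$ after weighting: for each $k$ the Fourier transform of $|x|^{2k}E$ is $(-\Delta_{\lambda})^k\hat{E}$, and differentiating the quotient $(1-\psi)/P$ and invoking symbol-type bounds $|\partial^{\gamma}P(\lambda)|\lesssim|\lambda|^{\nu-|\gamma|}$ gives $|\partial^{\gamma}\hat{E}(\lambda)|\lesssim|\lambda|^{-\nu-|\gamma|}$ for large $|\lambda|$. This forces $|x|^{2k}E\in H^s(\mathbb{R}^n)$ for every $s<\nu+2k-\tfrac{n}{2}$, whence $|x|^{2k}E\in C^m$ by Sobolev embedding; away from the origin the weight $|x|^{2k}$ is smooth and nonvanishing, so $E$ itself is $C^m$ there, and letting $k\to\infty$ delivers $E\in C^{\infty}(\mathbb{R}^n\setminus\{0\})$.

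The main obstacle will be exactly the symbol estimate $|\partial^{\gamma}P(\lambda)|\lesssim|\lambda|^{\nu-|\gamma|}$, which is trivial in Folland's polynomial setting but here can fail: when $\alpha$ is a genuinely fractional multi-index, each factor $\lambda_j^{\alpha_j}$ carries a branch cut and is not smooth across the coordinate hyperplane $\{\lambda_j=0\}$, a set meeting $\{|\lambda|>R\}$ as soon as $n\geq2$. I would handle this by localising: on any conic region bounded away from the hyperplanes the powers are smooth and the bounds hold verbatim, while the contributions from neighbourhoods of the hyperplanes must be controlled separately, checking that the (locally integrable) singularities of the derivatives of $\lambda^{\alpha}$ are mild enough not to spoil the weighted-$L^2$ estimates. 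In one dimension this difficulty disappears entirely, since the sole singular point $\lambda=0$ is already excised by $\psi$, and the construction reduces cleanly to the classical one.
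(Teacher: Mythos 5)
Your construction is essentially identical to the paper's: the same cutoff $\hat{E}=(1-\chi)/P$, the same Fourier-side identity $P(\lambda)\hat{E}(\lambda)=1-\chi(\lambda)$ giving $P(D)E=\delta_0+\omega$ with $\omega\in\mathcal{S}\subset\mathcal{E}$, and the same device of weighting by powers of $x$ to convert decay of derivatives of $\hat{E}$ into smoothness of $E$ away from the origin (the paper bounds $\widehat{D^{\alpha}(x^{\beta}E)}$ in $L^1$ to get continuity, where you use weighted Sobolev embedding --- an immaterial variation). You are in fact more careful than the paper on two points: you place $\hat{E}$ in $\mathcal{S}'$ as a bounded function rather than asserting $\hat{E}\in\mathcal{E}'(\mathbb{R}^n)$ (which fails, since $\mathrm{supp}\,\hat{E}$ is unbounded), and you explicitly flag that for genuinely fractional $\alpha$ the symbol $\lambda^{\alpha}$ is not smooth across the coordinate hyperplanes meeting $\{|\lambda|>R\}$ when $n\geq2$, a difficulty the paper's estimate $\big|D^{\beta}\big(P(\lambda)^{-1}\big)\big|\ll|\lambda|^{-|\beta|-\nu}$ silently assumes away.
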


\begin{proof}
Fix a test function $\chi\in\mathcal{D}(\mathbb{R}^n)$ which is identically $1$ on the domain $|\lambda|\leq R$ and identically $0$ on the domain $|\lambda|>R+1$, where $R$ is as in Lemma \ref{bounds}. Let \[\hat{E}(\lambda):=\frac{1-\chi(\lambda)}{P(\lambda)}.\] This is well-defined because $1-\chi$ is zero at all zeros of $P$, and it is bounded by Lemma \ref{bounds}. By definition of $P$, we therefore have the leftmost of the following inclusions, leading to the rightmost: \[\hat{E}\in\mathcal{E}'(\mathbb{R}^n)\Rightarrow
\hat{E}\in\mathcal{S}'(\mathbb{R}^n)\Rightarrow
E\in\mathcal{S}'(\mathbb{R}^n)\Rightarrow E\in\mathcal{D}'(\mathbb{R}^n),\] where $E$ is the inverse Fourier transform of $\hat{E}$. Similarly, \[\chi\in\mathcal{D}(\mathbb{R}^n)\Rightarrow\chi\in\mathcal{S}(\mathbb{R}^n)\Rightarrow\omega\in\mathcal{S}(\mathbb{R}^n)\Rightarrow\omega\in\mathcal{E}(\mathbb{R}^n),\] where $\omega$ is the inverse Fourier transform of $-\chi$. Finally, \[P(\lambda)\hat{E}(\lambda)=1-\chi(\lambda)\Rightarrow P(D)E=\delta_0+\omega,\] so $E$ is a parametrix of $P(D)$.

On the domain $|\lambda|>R+1$, we have \[\Big|\widehat{D^{\alpha}(x^{\beta}E)}(\lambda)\Big|=\Big|\lambda^{\alpha}D^{\beta}E(\lambda)\Big|=\Big|\lambda^{\alpha}D^{\beta}\big(P(\lambda)^{-1}\big)\Big|\ll\big|\lambda\big|^{|\alpha|-|\beta|-\nu}\] for any multi-indices $\alpha,\beta$. So for all $\alpha,\beta$ with $|\beta|$ sufficiently large, the function $\widehat{D^{\alpha}(x^{\beta}E)}$ is in $L^1(\mathbb{R}^n)$, which means its inverse Fourier transform $D^{\alpha}(x^{\beta}E)$ is in $C(\mathbb{R}^n)$. So $E$ is in $C^{\infty}(\mathbb{R}^n\backslash\{0\})$. And the fact that $E\in\mathcal{S}'(\mathbb{R}^n)$ was already established above.
\end{proof}

\begin{lemma}
\label{SobolevConvolution}
If $\phi\in\mathcal{D}(\mathbb{R}^n)$ and $u\in H^t(\mathbb{R}^n)$ for some $n\in\mathbb{N},t\in\mathbb{R}$, then $[D^{\alpha},\phi](u)\in H^{t-|\alpha|+1}(\mathbb{R}^n)$ for any $\alpha\in\mathbb{C}^n$, where $[,]$ denotes a commutator.
\end{lemma}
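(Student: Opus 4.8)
The plan is to pass to the Fourier side and realise the commutator as a single integral operator, rather than expanding it termwise. First I would write $[D^\alpha,\phi](u)=D^\alpha(\phi u)-\phi D^\alpha u$, which makes sense because multiplication by $\phi$ is bounded on $H^t(\mathbb{R}^n)$ (itself a consequence of the Peetre/Young estimate below, since $\hat\phi\in\mathcal{S}$), so every object in sight is a tempered distribution and the Fourier manipulations are legitimate. Taking Fourier transforms, multiplication by $\phi$ becomes convolution with $\hat\phi\in\mathcal{S}(\mathbb{R}^n)$, and by Lemma \ref{Fourier} (together with the phase factor built into $D^\alpha$) the operator $D^\alpha$ becomes multiplication by the symbol $\lambda^\alpha$, so the two terms combine into
\[\widehat{[D^\alpha,\phi](u)}(\lambda)=\frac{1}{(2\pi)^n}\int_{\mathbb{R}^n}\hat\phi(\lambda-\mu)\,\big(\lambda^\alpha-\mu^\alpha\big)\,\hat u(\mu)\,\mathrm{d}\mu.\]
The commutator structure has thus been concentrated into the symbol difference $\lambda^\alpha-\mu^\alpha$ in the kernel, which is the fractional analogue of the cancellation of the top-order term in Osler's product rule (Lemma \ref{Osler}). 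I would use Osler only as the heuristic predicting a gain of one order, \emph{not} as the computational tool: the naive termwise expansion $\sum_{|\beta|\geq1}\binom{\alpha}{\beta}(D^\beta\phi)(D^{\alpha-\beta}u)$ contains fractional-integral factors $D^{\alpha-\beta}u$ whose symbols $\lambda^{\alpha-\beta}$ blow up at the origin, so that series does not converge termwise in any Sobolev space, and the honest argument must keep the kernel intact.

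Next I would reduce the assertion $[D^\alpha,\phi](u)\in H^{t-|\alpha|+1}$ to the $L^2$-boundedness of an explicit integral operator. Writing $\hat u(\mu)=(1+|\mu|^2)^{-t/2}g(\mu)$ with $g\in L^2$ and $\|g\|_{L^2}=\|u\|_{H^t}$, the desired bound $\int(1+|\lambda|^2)^{t-|\alpha|+1}\big|\widehat{[D^\alpha,\phi](u)}(\lambda)\big|^2\,\mathrm{d}\lambda<\infty$ becomes the statement that the kernel
\[\tilde{K}(\lambda,\mu)=(1+|\lambda|^2)^{\frac{t-|\alpha|+1}{2}}\,\hat\phi(\lambda-\mu)\,\big(\lambda^\alpha-\mu^\alpha\big)\,(1+|\mu|^2)^{-t/2}\]
defines a bounded operator on $L^2(\mathbb{R}^n)$. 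I would establish this via Schur's test, bounding $\sup_\lambda\int|\tilde{K}(\lambda,\mu)|\,\mathrm{d}\mu$ and $\sup_\mu\int|\tilde{K}(\lambda,\mu)|\,\mathrm{d}\lambda$. The two ingredients are Peetre's inequality, used to transfer the weight $(1+|\lambda|^2)^{(t-|\alpha|+1)/2}$ onto the factors $(1+|\mu|^2)$ and $(1+|\lambda-\mu|^2)$, and the symbol-difference estimate $|\lambda^\alpha-\mu^\alpha|\leq C\,|\lambda-\mu|\,(1+|\lambda|+|\mu|)^{|\alpha|-1}$, the quantitative form of ``the commutator gains one derivative.'' The decisive point is that the powers of $(1+|\mu|^2)$ cancel exactly — the exponent $\tfrac{|\alpha|-1}{2}+\tfrac{t-|\alpha|+1}{2}-\tfrac{t}{2}$ vanishes — leaving a kernel dominated by a Schwartz function of $\lambda-\mu$, whose integral against $\mathrm{d}\mu$ or $\mathrm{d}\lambda$ is finite and uniform.

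The hard part will be the symbol-difference estimate itself, because $\lambda^\alpha=\prod_j\lambda_j^{\alpha_j}$ is not a polynomial when $\alpha$ is non-integer: it carries branch cuts along the coordinate hyperplanes $\lambda_j=0$, where it fails to be differentiable and where, since $\mathrm{Re}(\alpha_j)<1$ is allowed, its gradient $|\nabla(\lambda^\alpha)|\leq C(1+|\lambda|)^{|\alpha|-1}$ actually degenerates. The mean value theorem that produces the gaining factor $|\lambda-\mu|$ is therefore not available everywhere, and I would handle this by a quantitative splitting of the $\mu$-integration: on the region where the segment from $\mu$ to $\lambda$ stays at a controlled distance from the branch locus, $\lambda^\alpha$ is smooth with the stated gradient bound and the mean value theorem applies directly; on the complementary near-branch region I would abandon the mean value theorem and instead use the crude bounds $|\lambda^\alpha|\leq C(1+|\lambda|)^{|\alpha|}$ and $|\mu^\alpha|\leq C(1+|\mu|)^{|\alpha|}$, recovering the missing factor of $|\lambda-\mu|$ from the rapid decay of $\hat\phi(\lambda-\mu)$. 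Because $\mathrm{Re}(\alpha_j)\geq0$ in the cases of interest, the symbol stays bounded (indeed vanishes) as any $\lambda_j\to0$, so no genuine singularity enters the kernel: it is only the loss of \emph{smoothness}, not of \emph{boundedness}, that must be circumvented. Making this splitting uniform across all of frequency space is precisely the branch-cut analysis absent from the classical polynomial-symbol proof, and is where essentially all of the new work lies.
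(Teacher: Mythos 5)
Your overall architecture --- passing to the Fourier side, keeping the symbol difference $\lambda^{\alpha}-\mu^{\alpha}$ intact inside a single kernel, extracting the one-order gain from that difference on the near-diagonal region, and using the rapid decay of $\hat{\phi}(\lambda-\mu)$ to control the far region --- is essentially the paper's strategy: the paper splits the frequency integral into $I_1$ (where $|\mu|\leq\tfrac12|\lambda|$, treated by a first-order expansion of $(1-\mu/\lambda)^{\alpha}$, i.e.\ your mean-value estimate) and $I_2$ (where $|\mu|>\tfrac12|\lambda|$, treated by Paley--Wiener decay of $\hat{\phi}$, i.e.\ your Schwartz-kernel domination). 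Your Schur-test/Peetre formulation is cleaner and more quantitative than the paper's, the exponent bookkeeping is right, and you have correctly identified the crux: the symbol $\lambda^{\alpha}=\prod_j\lambda_j^{\alpha_j}$ is not $C^1$ across the coordinate hyperplanes, so the mean value theorem is unavailable exactly where it is needed.

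However, the patch you propose for the near-branch region does not close the gap. In that region $|\lambda-\mu|$ need not be large, so the decay of $\hat{\phi}(\lambda-\mu)$ gives you nothing, and the crude bounds $|\lambda^{\alpha}|,|\mu^{\alpha}|\lesssim(1+|\cdot|)^{|\alpha|}$ lose a factor that cannot be recovered. Concretely, take $n=2$, $\alpha=(\tfrac12,\tfrac12)$, $\lambda=(\epsilon,M)$, $\mu=(-\epsilon,M)$: then $|\lambda-\mu|=2\epsilon$ while $|\lambda^{\alpha}-\mu^{\alpha}|=|1-i|\,\epsilon^{1/2}M^{1/2}$, so the required estimate $|\lambda^{\alpha}-\mu^{\alpha}|\lesssim|\lambda-\mu|\,(1+|\lambda|+|\mu|)^{|\alpha|-1}\sim\epsilon$ fails by a factor of order $(M/\epsilon)^{1/2}$, and (choosing $\phi$ with $\hat{\phi}>0$) the Schur row-integral $\sup_{\lambda}\int|\tilde{K}(\lambda,\mu)|\,\mathrm{d}\mu$ diverges as $M\to\infty$. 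The difficulty is not merely technical: across $\{\lambda_j=0\}$ the phase of the symbol jumps by $e^{\mp i\pi\alpha_j}$, so the symbol is only H\"older of exponent $\mathrm{Re}(\alpha_j)$ there, and for $n\geq2$ (where the branch locus is non-compact) a commutator gain of a full order cannot be extracted from symbol smoothness alone; no rearrangement of these kernel estimates will produce it. You would need either to restrict to $n=1$ (where the branch locus is the compact set $\{0\}$ and low frequencies are harmless), to assume $\min_j\mathrm{Re}(\alpha_j)\geq1$, or to weaken the conclusion to a gain of $\min(1,\min_j\mathrm{Re}(\alpha_j))$. For what it is worth, the paper's own near-diagonal estimate has the same soft spot --- its binomial expansion of $(1-\mu/\lambda)^{\alpha}$ on $\{|\mu|\leq\tfrac12|\lambda|\}$ implicitly requires $|\mu_j|<|\lambda_j|$ componentwise, which that region does not guarantee --- so you have located the real difficulty precisely, but neither your splitting nor the paper's resolves it.
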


\begin{proof}
Note that when $\alpha$ is an ordinary multi-index in $(\mathbb{Z}^+_0)^n$, this result is straightforwardly proved using the product rule: the operator $[D^{\alpha},\phi]$ is just an $(|\alpha|-1)$th-order differential operator. In the general case, however, we need to use infinite series and some more complicated estimates. It may appear that Osler's generalisation of the product rule (Lemma \ref{Osler}) is applicable, but analyticity is out of the question when we are dealing with test functions $\phi\in\mathcal{D}(\mathbb{R}^n)$.

The property of a function $f$ being in a Sobolev space $H^s(\mathbb{R}^n)$ depends only on the large-$\lambda$ behaviour of the Fourier transform $\hat{f}(\lambda)$, so it will suffice to prove that the Fourier transform of $[D^{\alpha},\phi](u)$ behaves like the Fourier transform of a function in $H^{t-|\alpha|+1}(\mathbb{R}^n)$ when $|\lambda|$ has some fixed lower bound.

Firstly, we rewrite the expression as follows:
\begin{align*}
\widehat{[D^{\alpha},\phi](u)}(\lambda)&=\widehat{D^{\alpha}(\phi u)}(\lambda)-\widehat{(\phi D^{\alpha}u)}(\lambda)=\lambda^{\alpha}\hat{\phi}(\lambda)\ast\hat{u}(\lambda)-\hat{\phi}(\lambda)\ast(\lambda^{\alpha}\hat{u}(\lambda)) \\
&=\lambda^{\alpha}\int_{\mathbb{R}^n}\hat{\phi}(\mu)\hat{u}(\lambda-\mu)\,\mathrm{d}\mu\,\,-\int_{\mathbb{R}^n}\hat{\phi}(\mu)(\lambda-\mu)^{\alpha}\hat{u}(\lambda-\mu)\,\mathrm{d}\mu \\
&=I_1(\lambda)+I_2(\lambda),
\end{align*}
where the two integral expressions $I_1,I_2$ are defined by
\begin{align*}
I_1(\lambda)&\coloneqq\lambda^{\alpha}\int_{|\mu|\leq\frac{1}{2}|\lambda|}\hat{\phi}(\mu)\Big(1-\big(1-\tfrac{\mu}{\lambda}\big)^{\alpha}\Big)\hat{u}(\lambda-\mu)\,\mathrm{d}\mu; \\
I_2(\lambda)&\coloneqq\int_{|\mu|>\frac{1}{2}|\lambda|}\hat{\phi}(\mu)\Big(\lambda^{\alpha}-(\lambda-\mu)^{\alpha}\Big)\hat{u}(\lambda-\mu)\,\mathrm{d}\mu.
\end{align*}
We shall evaluate $I_1$ and $I_2$ separately and prove bounds to establish that each of them is the Fourier transform of a function in $H^{t-|\alpha|+1}(\mathbb{R}^n)$, which will suffice to prove the lemma.

Firstly,
\begin{align*}
I_1(\lambda)&=\lambda^{\alpha}\int_{|\mu|\leq\frac{1}{2}|\lambda|}\hat{\phi}(\mu)\left[\sum_{m=1}^{\infty}\tbinom{\alpha}{m}(\tfrac{\mu}{\lambda})^m\right]\hat{u}(\lambda-\mu)\,\mathrm{d}\mu \\
&=\lambda^{\alpha}\int_{|\mu|\leq\frac{1}{2}|\lambda|}\hat{\phi}(\mu)\Big[\tbinom{\alpha}{1}\tfrac{\mu}{\lambda}+o\left(\tfrac{\mu}{\lambda}\right)\Big]\hat{u}(\lambda-\mu)\,\mathrm{d}\mu \\
&\sim\alpha\lambda^{\alpha-1}\int_{|\mu|\leq\frac{1}{2}|\lambda|}\mu\hat{\phi}(\mu)\hat{u}(\lambda-\mu)\,\mathrm{d}\mu \\
&\ll\alpha\lambda^{\alpha-1}\widehat{\phi'}(\lambda)\ast \hat{u}(\lambda) \\
&=\alpha\widehat{D^{\alpha-1}(\phi'u)}. 
\end{align*}
Since $\phi'\in\mathcal{D}(\mathbb{R}^n)$, we have $\phi'u\in H^t(\mathbb{R}^n)$. By Lemma \ref{Fourier}, this means the above expression is the Fourier transform of a function in $H^{t-|\alpha|+1}(\mathbb{R}^n)$, as required.

Now consider $I_2$. By the Paley-Wiener-Schwartz theorem (see \cite[Chapter 1]{hormander}), the function $\hat{\phi}$ is entire and satisfies an inequality of the form $|\hat{\phi}(\lambda)|\ll_{{}_N}(1+|\lambda|)^{-N}$ for $N\in\mathbb{N}$, $\lambda\in\mathbb{R}^n$, where the subscript means the multiplicative constant depends on $N$. So
\begin{align*}
I_2&=\int_{|\mu|>\frac{1}{2}|\lambda|}\hat{\phi}(\mu)\Big(\lambda^{\alpha}-(\lambda-\mu)^{\alpha}\Big)\hat{u}(\lambda-\mu)\,\mathrm{d}\mu \\
&\ll_{{}_N}\int_{|\mu|>\frac{1}{2}|\lambda|}(1+|\mu|)^{-N-|\alpha|}\big(|2\mu|^{|\alpha|}+|3\mu|^{|\alpha|}\big)|\hat{u}(\lambda-\mu)|\,\mathrm{d}\mu \\
&\ll\int_{|\mu|>\frac{1}{2}|\lambda|}(1+|\mu|)^{-N}|\hat{u}(\lambda-\mu)|\,\mathrm{d}\mu \\
&\ll \big((1+|\bullet|)^{-N}\ast|\hat{u}|\big)(\lambda).
\end{align*}
Since $u$ is in $H^t(\mathbb{R}^n)$ and $N$ can be arbitrarily large, this final expression must be the Fourier transform of a function in $H^{t+K}(\mathbb{R}^n)$ for arbitrarily large $K$. And $H^a\subset H^b$ for $a>b$, so $I_2$ is the Fourier transform of a function in $H^{t-|\alpha|+1}(\mathbb{R}^n)$, as required.
\end{proof}

\begin{lemma}
\label{SchwartzConvolution}
If $f$ and $g$ are functions, at least one of which is a Schwartz function, and $\nu\in\mathbb{C}$ is such that $D_+^{\nu}f$ and $D_+^{\nu}g$ are well-defined, then $D_+^{\nu}f\ast g=f\ast D_+^{\nu}g$, where $\ast$ denotes convolution.
\end{lemma}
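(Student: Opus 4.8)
The plan is to move the whole identity to the Fourier side, where fractional differentiation becomes multiplication by a power of $\lambda$ (Lemma \ref{Fourier}) and convolution becomes pointwise multiplication, so that the two sides become manifestly equal. Say $g$ is the Schwartz factor (the other case is identical). First I would check that both $D_+^\nu f \ast g$ and $f \ast D_+^\nu g$ are genuine tempered distributions: convolution against a Schwartz function is a well-behaved operation on $\mathcal{S}'(\mathbb{R}^n)$, so each convolution is meaningful once its factors are known to lie in the appropriate dual classes, which the standing hypothesis that $D_+^\nu f$ and $D_+^\nu g$ exist provides.

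Then I would take Fourier transforms. By the convolution theorem and Lemma \ref{Fourier},
\[\widehat{D_+^\nu f \ast g}(\lambda) = \widehat{D_+^\nu f}(\lambda)\,\hat g(\lambda) = (-i\lambda)^\nu \hat f(\lambda)\,\hat g(\lambda),\]
and symmetrically $\widehat{f \ast D_+^\nu g}(\lambda) = \hat f(\lambda)\,(-i\lambda)^\nu \hat g(\lambda)$. These are literally the same function of $\lambda$: the scalar multiplier $(-i\lambda)^\nu$ is common to both, so even the branch cut of the power function is irrelevant. Hence the two convolutions have equal Fourier transforms, and by injectivity of the Fourier transform on $\mathcal{S}'(\mathbb{R}^n)$ they coincide.

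The main obstacle is not the one-line computation but its justification, because after the fractional operator is applied the relevant factor is no longer Schwartz. Since $(-i\lambda)^\nu$ has a branch-point singularity at the origin for non-integer $\nu$, the function $D_+^\nu g$ fails to be Schwartz even when $g$ is, so one cannot simply invoke the elementary convolution theorem for two Schwartz functions. I would handle this by keeping the surviving Schwartz factor on the outside of each convolution: for $D_+^\nu f \ast g$ this pairs a tempered distribution with a Schwartz function, and for $f \ast D_+^\nu g$ I would use commutativity to put $g$ (whose transform $\hat g$ remains in $\mathcal{S}$) back on the outside, so that the product appearing on the Fourier side is always (tempered)$\times$(Schwartz) and the convolution theorem applies cleanly.

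As a more self-contained alternative that avoids the distributional convolution theorem entirely, I would split on the sign of $\mathrm{Re}(\nu)$. For $\mathrm{Re}(\nu)<0$ the operator is a genuine convolution, $D_+^\nu h = h \ast \Phi_\nu$ with $\Phi_\nu(x) = x^{-\nu-1}/\Gamma(-\nu)$ for $x>0$ and $\Phi_\nu(x)=0$ otherwise, exactly as in the proof of Lemma \ref{Fourier}; the identity then reduces to the associativity and commutativity of convolution, $(f \ast \Phi_\nu)\ast g = f \ast (\Phi_\nu \ast g)$, which the Schwartz hypothesis legitimises. For $\mathrm{Re}(\nu)\geq0$ I would write $\nu = n + (\nu-n)$ with $n = \lfloor\mathrm{Re}(\nu)\rfloor+1$, so that $D_+^\nu = \tfrac{d^n}{dx^n}D_+^{\nu-n}$ by Definition \ref{RLderivative}; pulling the integer-order derivative outside the convolution by the classical rule $\tfrac{d^n}{dx^n}(a\ast b) = (\tfrac{d^n}{dx^n}a)\ast b = a\ast\tfrac{d^n}{dx^n}b$ and then applying the already-established $\mathrm{Re}(\nu)<0$ case to the factor $D_+^{\nu-n}$ yields the full identity. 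This reduction is also where the choice $c=-\infty$ pays off, since it is what makes the Fourier approach available in the first place.
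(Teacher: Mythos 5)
Your second, ``self-contained'' argument is essentially the paper's own proof: the paper likewise writes $D_+^{\nu}h=h\ast\Phi$ for $\mathrm{Re}(\nu)<0$ and concludes by associativity and commutativity of convolution, then treats $\mathrm{Re}(\nu)\geq0$ via Definition \ref{RLderivative}. The one real difference in that branch is a small improvement on your side: the paper transfers the integer-order derivative directly onto $g$, arriving at $f\ast D_+^{\nu-n}\big(\tfrac{d^ng}{dx^n}\big)$, which is a Caputo rather than a Riemann--Liouville derivative, and so it must invoke the equivalence of the two for Schwartz functions (the discussion following Lemma \ref{ComposeDerivatives}); your version pulls $\tfrac{d^n}{dx^n}$ outside the convolution, applies the already-proved negative-order case, and pushes it back onto the second factor, landing directly on $\tfrac{d^n}{dx^n}D_+^{\nu-n}g=D_+^{\nu}g$ and avoiding the Caputo detour altogether. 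Your primary (Fourier) route is genuinely different from the paper's, but its justification is weaker than you suggest at exactly the point you flag: ``putting $g$ back on the outside'' of $f\ast D_+^{\nu}g$ amounts to regrouping the multiplier $(-i\lambda)^{\nu}$ with $\hat f$ rather than with $\hat g$ on the Fourier side, which is precisely the identity being proved; since $D_+^{\nu}g$ is no longer Schwartz, the $\mathcal{S}'\times\mathcal{S}$ convolution theorem does not apply to that side as written, and closing the gap essentially forces you back to the time-domain computation of your alternative. Since that alternative is complete and correct, the proposal as a whole is sound, with the Fourier version best regarded as motivation rather than proof.
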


\begin{proof}
When $\mathrm{Re}(\nu)<0$, writing $D_+^{\nu}f=f\ast\Phi$ as in Lemma \ref{Fourier} and using the associativity of convolution gives \[D_+^{\nu}f\ast g=(f\ast\Phi)\ast g=f\ast(\Phi\ast g)=f\ast(g\ast\Phi)=f\ast D_+^{\nu}g.\]

When $\mathrm{Re}(\nu)\geq0$ and $n$ is defined as in Definition \ref{Cauchy}, assuming without loss of generality that $g$ is a Schwartz function, using Definition \ref{RLderivative} and the above result gives \[D_+^{\nu}f\ast g=\Big(\tfrac{d^n}{dx^n}\big(D_+^{\nu-n}f\big)\Big)\ast g=D_+^{\nu-n}f\ast\tfrac{d^ng}{dx^n}=f\ast D_+^{\nu-n}\big(\tfrac{d^ng}{dx^n}\big).\] The final expression on the right-hand side is a Caputo derivative and not a Riemann--Liouville derivative of $g$. However, since $g$ is a Schwartz function, its Caputo and Riemann--Liouville derivatives are identical (by the discussion following Lemma \ref{ComposeDerivatives}), and the result follows.
\end{proof}

\begin{theorem}[Fractional elliptic regularity theorem]
\label{FERT}
If $P(D)$ is a $\nu$th-order elliptic fractional partial differential operator as above and $X$ is a domain in $\mathbb{R}^n$ and $u,f\in\mathcal{D}'(X)$ satisfy $P(D)u=f$, then \[f\in H^s_{loc}(X)\Rightarrow u\in H^{s+\nu}_{loc}(X).\]
\end{theorem}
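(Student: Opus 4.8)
The plan is to adapt the parametrix bootstrap used for Theorem~\ref{ERT} to the fractional setting, letting the two fractional-specific lemmas (\ref{SobolevConvolution} and \ref{SchwartzConvolution}) do the work that locality does for free in the classical case. Since the conclusion $u\in H^{s+\nu}_{loc}(X)$ is, by Definition~\ref{Sobolev}, exactly the assertion that $\phi u\in H^{s+\nu}(\mathbb{R}^n)$ for every $\phi\in\mathcal{D}(X)$, I would fix such a $\phi$ and reduce everything to a statement about the compactly supported distribution $\phi u\in\mathcal{E}'(\mathbb{R}^n)$. Because $\phi u$ has compact support it already lies in some Sobolev space $H^{t_0}(\mathbb{R}^n)$, and the whole argument then becomes an induction that raises this regularity index one unit at a time until it reaches $s+\nu$.

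First I would record the fundamental parametrix identity. Let $E$ and $\omega$ be as in Lemma~\ref{parametrices}, so that $P(D)E=\delta_0+\omega$ with $\omega\in\mathcal{E}(\mathbb{R}^n)$. Convolving with $\phi u$ and moving $P(D)$ across the convolution gives
\[\phi u=\delta_0\ast(\phi u)=\big(P(D)E\big)\ast(\phi u)-\omega\ast(\phi u)=E\ast P(D)(\phi u)-\omega\ast(\phi u),\]
where the crucial step $\big(P(D)E\big)\ast(\phi u)=E\ast P(D)(\phi u)$ is precisely the commutation of a fractional derivative past a convolution supplied by Lemma~\ref{SchwartzConvolution} (after a routine regularisation, since neither factor is itself a Schwartz function). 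The term $\omega\ast(\phi u)$ is the convolution of a smooth function with a compactly supported distribution, hence lies in $C^{\infty}(\mathbb{R}^n)$ and is harmless. For the remaining term I would split the forcing off from the commutator,
\[P(D)(\phi u)=\phi\,P(D)u+[P(D),\phi]u=\phi f+[P(D),\phi]u,\]
using $P(D)u=f$ on $X$, which yields
\[\phi u=E\ast(\phi f)+E\ast[P(D),\phi]u-\omega\ast(\phi u).\]

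Next I would quantify the regularity gained by each term. Convolution with $E$ gains $\nu$ orders of smoothness: since $\widehat{E}(\lambda)=(1-\chi(\lambda))/P(\lambda)$ and $|P(\lambda)|\gg(1+|\lambda|^2)^{\nu/2}$ for large $|\lambda|$ by Lemma~\ref{bounds}, the map $g\mapsto E\ast g$ sends $H^r(\mathbb{R}^n)$ into $H^{r+\nu}(\mathbb{R}^n)$. Hence $E\ast(\phi f)\in H^{s+\nu}$ because $\phi f\in H^s$. For the commutator, Lemma~\ref{SobolevConvolution} applied termwise to $P(\lambda)=\sum_{\alpha}c_{\alpha}\lambda^{\alpha}$ shows that if $\phi u\in H^t$ then $[P(D),\phi]u\in H^{t-\nu+1}$, so that the commutator behaves like an operator of order $\nu-1$ rather than $\nu$; this is the decisive one-order gain. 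Consequently $E\ast[P(D),\phi]u\in H^{(t-\nu+1)+\nu}=H^{t+1}$, and collecting the three contributions gives the bootstrap implication $\phi u\in H^t\Rightarrow\phi u\in H^{\min(s+\nu,\,t+1)}$. Starting from $t=t_0$ and iterating finitely many times raises the exponent past $s+\nu$, after which $\phi u\in H^{s+\nu}$; as $\phi$ was arbitrary this gives $u\in H^{s+\nu}_{loc}(X)$.

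The main obstacle is the step that is trivial classically but fails for nonlocal operators: controlling the commutator $[P(D),\phi]u$ and justifying that $P(D)$ passes through the convolution with $E$. In the integer-order case $[P(D),\phi]$ is literally a differential operator of order $\nu-1$ supported where $\phi$ varies, whereas here $D_+^{\alpha}$ reaches back to $-\infty$ and no such localisation is available; this is exactly why Lemmas~\ref{SobolevConvolution} and \ref{SchwartzConvolution} are required, and why their proofs (the frequency splitting into $I_1,I_2$, and the coincidence of Caputo and Riemann--Liouville derivatives at $c=-\infty$) carry the real weight. I would also take care that the subprincipal terms do not spoil the one-order gain: the finite spectral gap $\epsilon$ guaranteed by Definition~\ref{order} is what ensures, via Lemma~\ref{bounds}, that the lower-order part of $P$ genuinely contributes less regularity loss than the principal part, so that the inductive exponent strictly increases at each step and the bootstrap terminates.
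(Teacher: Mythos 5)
Your overall strategy is the same as the paper's: parametrix from Lemma \ref{parametrices}, the commutator estimate of Lemma \ref{SobolevConvolution} to gain one order per step, convolution with $E$ to gain $\nu$ orders, and a finite bootstrap up to $H^{s+\nu}$. However, there is a genuine gap in the way you run the bootstrap, and it is precisely the point where the nonlocality of $D_+^{\alpha}$ bites.

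You iterate with a single fixed cutoff $\phi$, claiming that ``if $\phi u\in H^t$ then $[P(D),\phi]u\in H^{t-\nu+1}$.'' Lemma \ref{SobolevConvolution} does not give you this: it bounds $[D^{\alpha},\phi](v)$ for $v\in H^t(\mathbb{R}^n)$, i.e.\ the \emph{argument} of the commutator must lie in a global Sobolev space. In your identity the commutator acts on $u$ itself, which is only a distribution on $X$ and is never known to lie in any $H^t(\mathbb{R}^n)$; knowing $\phi u\in H^t$ does not help, because $[P(D),\phi]u\neq[P(D),\phi](\phi u)$ (indeed $\phi^2\neq\phi$), so the implication ``$\phi u\in H^t\Rightarrow\phi u\in H^{\min(s+\nu,t+1)}$'' is not justified and the induction does not close. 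The paper's proof avoids this by introducing a nested family of cutoffs $\psi_0,\psi_1,\dots,\psi_m$ with $\mathrm{supp}(\psi_i)\subset\mathrm{supp}(\psi_{i-1})$ and $\psi_{i-1}\equiv1$ on $\mathrm{supp}(\psi_i)$: at stage $i$ the commutator is $[P(D),\psi_i](\psi_{i-1}u)$, whose argument $\psi_{i-1}u$ is a compactly supported distribution already placed in $H^{A_{i-1}}(\mathbb{R}^n)$ by the previous stage, so Lemma \ref{SobolevConvolution} applies legitimately and each stage gains one order. The number of cutoffs $m=\lceil s+\nu-t\rceil+1$ is chosen at the end so the chain terminates at $H^{s+\nu}$. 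Your proof becomes correct once you replace the single-$\phi$ iteration with such a shrinking chain; everything else (the parametrix identity via Lemma \ref{SchwartzConvolution}, the $\nu$-order gain from $E$, the role of the spectral gap $\epsilon$) matches the paper's argument.
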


\begin{proof}
First assume $X=\mathbb{R}^n$ and $u$ is compactly supported (i.e. in $\mathcal{E}'(\mathbb{R}^n)$). By Lemma \ref{parametrices}, $P(D)$ has a parametrix $E$ and (using Lemma \ref{SchwartzConvolution}) \[u=\delta_0\ast u=(P(D)E)\ast u-\omega\ast u=E\ast(P(D)u)-\omega\ast u=E\ast f-\omega\ast u.\] Since $u$ has compact support, $\omega\ast u$ is a Schwartz function, so it will be enough to prove $E\ast f\in H^{s+\nu}(\mathbb{R}^n)$. If $|\lambda|>R+1$, then by Lemma \ref{bounds} and the definition of $\hat{E}$, \[\Big|\widehat{E\ast f}(\lambda)\Big|=\Big|\tfrac{\hat{f}(\lambda)}{P(\lambda)}\Big|\ll(1+|\lambda|^2)^{-\nu/2}\hat{f}(\lambda).\] And $f\in H^s(\mathbb{R}^n)$, so $E\ast f\in H^{s+\nu}(\mathbb{R}^n)$ as required.

To prove the general case, we shall use a bootstrapping argument. First of all, let us note that it makes sense to define fractional derivatives of functions in $\mathcal{D}'(X)$ even when $X$ does not extend to negative infinity: the integrals from $-\infty$ to $x$ required by Definition \ref{RLintegral} are simply taken to be zero outside of $X$. In other words, the arbitrary test function $\phi\in\mathcal{D}(X)$ is extended to a function on all of $\mathbb{R}^n$ which is supported on $X$.

Fix $\phi\in\mathcal{D}(X)$; it will suffice to prove that $\phi u\in H^{s+\nu}(\mathbb{R}^n)$. Let $\psi_0,\psi_1,\dots,\psi_m$ (where the value of $m$ will be decided later) be test functions in $\mathcal{D}(\mathbb{R}^n)$ with supports as shown in Figure \ref{fig1}, i.e. such that:

\begin{figure*}
\centering
  \includegraphics[width=0.5\textwidth]{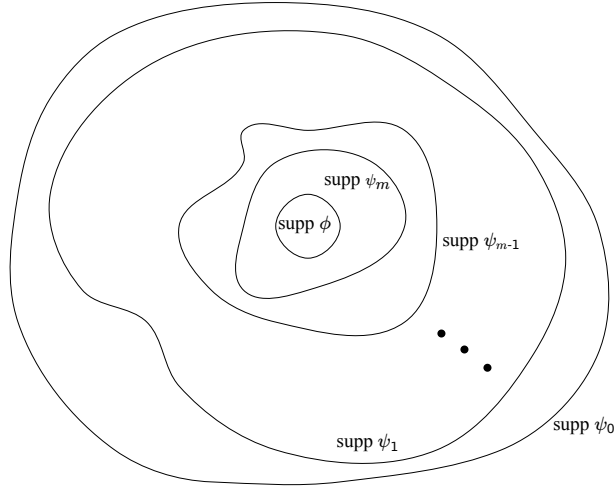}
\caption{The domains involved in the bootstrapping proof of Theorem \ref{FERT}}
\label{fig1}       
\end{figure*}

\begin{equation}
\begin{split}
\mathrm{supp}(\phi)\subset\mathrm{supp}(\psi_m)&,\;\;\psi_m=1\text{ on supp}(\phi); \\
\mathrm{supp}(\psi_i)\subset\mathrm{supp}(\psi_{i-1})&,\;\;\psi_{i-1}=1\text{ on supp}(\psi_i)\;\;\;\forall i.
\end{split}
\end{equation}
Now $\psi_0u$ is in $\mathcal{E}'(\mathbb{R}^n)$ and therefore in $H^t(\mathbb{R}^n)$ for some $t\in\mathbb{R}$. So

\begin{alignat*}{2}
P(D)(\psi_1u)&=\psi_1P(D)u+[P(D),\psi_1]u &&\text{   (where $[,]$ is a commutator)} \\
&= \psi_1f+[P(D),\psi_1](\psi_0u) &&\text{   (by (2))} \\
&= (\text{element of } H^s(\mathbb{R}^n))+(\text{element of } H^{t-\nu+1)}(\mathbb{R}^n)) &&\text{   (by Lemma \ref{SobolevConvolution})} \\
&\in H^{\min(s,t-\nu+1)}(\mathbb{R}^n) &&\text{   (since $a>b\Rightarrow H^a\subset H^b$)}.
\end{alignat*}

\noindent Now the first part of the proof shows that $\psi_1u\in H^{A_1}(\mathbb{R}^n)$ where $A_1:=\min(s,t-\nu+1)+\nu=\min(s+\nu,t+1)$.

By exactly the same argument, $P(D)(\psi_2u)=\psi_2f+[P(D),\psi_2](\psi_1u)$ and $\psi_2u\in H^{A_2}(\mathbb{R}^n)$ where $A_2:=\min(s+\nu,A_1+1)=\min(s+\nu,t+2)$.

Continuing in this manner eventually yields $\psi_mu\in H^{\min(s+\nu,t+m)}(\mathbb{R}^n)$. Now set the natural number $m$ to be $\lceil s+\nu-t\rceil+1$, so that $\psi_mu\in H^{s+\nu}(\mathbb{R}^n)$, which means $\phi u\in H^{s+\nu}(\mathbb{R}^n)$ as required, by (2).
\end{proof}

\section{Conclusions}
\label{sec:concl}
The elliptic regularity theorem is an important result in the theory of PDEs, and its fractional counterpart should have no less significance in the theory of fractional PDEs. Elliptic fractional PDEs have already been studied in papers such as \cite{bisci,chen,caffarelli,dipierro}, which present various methods for analysing the solutions of certain classes of elliptic fractional PDE. The current work fits in with such results by providing a quick way of establishing important regularity properties of linear elliptic fractional PDEs.

As example applications of our work, we consider the following two simple corollaries.

\begin{corollary}
\label{hypoelliptic}
Let $P(D)$ be a fractional linear partial differential operator of the form described above. If it is elliptic, then it is also hypoelliptic.
\end{corollary}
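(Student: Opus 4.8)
The plan is to recall what hypoellipticity means and then show it follows almost immediately from the fractional elliptic regularity theorem just proved. Recall that a differential operator $P(D)$ is called \emph{hypoelliptic} if, for every open set $X\subset\mathbb{R}^n$ and every distribution $u\in\mathcal{D}'(X)$, the condition $P(D)u\in C^{\infty}(X)$ implies $u\in C^{\infty}(X)$. So I would begin by fixing an arbitrary domain $X$ and a distribution $u\in\mathcal{D}'(X)$ with $P(D)u=f\in C^{\infty}(X)$, and the goal is to conclude $u\in C^{\infty}(X)$.

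The key bridge is the standard Sobolev-embedding characterisation of smoothness: a distribution lies in $C^{\infty}(X)$ if and only if it lies in $H^s_{loc}(X)$ for \emph{every} real $s$. First I would observe that since $f\in C^{\infty}(X)$, we have $f\in H^s_{loc}(X)$ for all $s\in\mathbb{R}$. Applying Theorem \ref{FERT} with forcing $f$ and solution $u$, for each such $s$ we obtain $u\in H^{s+\nu}_{loc}(X)$. As $s$ ranges over all of $\mathbb{R}$, the order $s+\nu$ also ranges over all of $\mathbb{R}$, so $u$ lies in $H^t_{loc}(X)$ for every $t\in\mathbb{R}$. By the embedding characterisation this gives $u\in C^{\infty}(X)$, which is exactly hypoellipticity.

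I do not anticipate a serious obstacle here, since the corollary is essentially a repackaging of Theorem \ref{FERT} through the Sobolev description of smoothness. The one point deserving a sentence of care is the ``if and only if'' linking $C^{\infty}(X)$ with $\bigcap_s H^s_{loc}(X)$: the easy direction (smooth implies every Sobolev order) is immediate, while the direction I actually use (membership in all $H^s_{loc}$ implies smoothness) is the local form of the Sobolev embedding theorem, namely that $H^{s}_{loc}(X)\subset C^{k}(X)$ once $s>k+n/2$, so that membership in all $H^s_{loc}$ forces membership in every $C^k$ and hence in $C^{\infty}$. I would cite this as a standard fact. The only genuinely fractional input is that the regularity gain $\nu$ need not be an integer, but since we are quantifying over all real $s$ this causes no difficulty whatsoever.
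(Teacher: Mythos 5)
Your argument is correct and follows essentially the same route as the paper, which simply asserts that Theorem \ref{FERT} yields $f\in C^{\infty}(X)\Rightarrow u\in C^{\infty}(X)$; you have merely filled in the standard detail (namely $C^{\infty}(X)=\bigcap_{s}H^{s}_{loc}(X)$ via Sobolev embedding) that the paper leaves implicit.
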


\begin{proof}
Recall the definition of hypoellipticity: a partial differential operator $\partial$ is hypoelliptic if whenever $\partial u$ is a smooth function, so also is $u$ on the same domain.

If $P(D)$ is elliptic, then using all notation as in Theorem \ref{FERT}, we must have $f\in C^{\infty}(X)\Rightarrow u\in C^{\infty}(X)$, i.e. $P(D)$ is also hypoelliptic.
\end{proof}

\begin{corollary}
\label{Laplacian}
Consider the operator $\widetilde{\Delta}_{\alpha}\coloneqq\sum_{i=1}^n\partial_i^{\alpha}$ with $0<\alpha<1$, a fractional generalisation of the Laplacian, and a function $u\in\mathcal{D}'(X)$ where $X$ is a domain in $\mathbb{R}^n$.

If $u$ is a solution to the fractional Laplace-type equation $\widetilde{\Delta}_{\alpha}u=0$, then it must necessarily be smooth. More generally, if $u$ is the solution to a fractional Poisson-type equation $\widetilde{\Delta}_{\alpha}v=f$ with forcing $f\in H^s_{loc}(X)$, then $u\in H^{s+\alpha}_{loc}(X)$.
\end{corollary}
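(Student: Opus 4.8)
The plan is to recognise $\widetilde{\Delta}_{\alpha}$ as a fractional differential operator $P(D)$ of precisely the type covered by Theorem \ref{FERT}, and then merely invoke that theorem together with Corollary \ref{hypoelliptic}; almost all the work has already been done upstream. In the notation fixed at the start of Section \ref{sec:main}, the operator $\widetilde{\Delta}_{\alpha}=\sum_{i=1}^n\partial_i^{\alpha}$ corresponds to the symbol $P(\lambda)=\sum_{i=1}^n\lambda_i^{\alpha}$, i.e.\ to the finite family of fractional multi-indices $\alpha e_1,\dots,\alpha e_n$, where $\alpha e_i$ denotes the multi-index with $\alpha$ in the $i$th slot and $0$ elsewhere (any fixed normalising constant relating $\partial_i^{\alpha}$ to $D_i^{\alpha}$ is a nonzero scalar and so is irrelevant to what follows). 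Thus $P$ is a finite linear combination of power functions; every term has total order $|\alpha e_i|=\alpha$, so by Definition \ref{order} the order is $\nu=\alpha$, and since all terms are of maximal order the principal symbol coincides with $P$ itself: $\sigma_P(\lambda)=\sum_{i=1}^n\lambda_i^{\alpha}$.

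The only genuine verification required is ellipticity, namely that $\sigma_P(\lambda)\neq0$ for every nonzero real $\lambda$. Here I would use the branch convention $\arg\in[0,2\pi)$ fixed after Definition \ref{RLintegral}: for a real component $\lambda_i>0$ one has $\lambda_i^{\alpha}=|\lambda_i|^{\alpha}$, while for $\lambda_i<0$ one has $\lambda_i^{\alpha}=|\lambda_i|^{\alpha}e^{i\pi\alpha}$. Splitting the sum according to the sign of each component gives
\[
\mathrm{Im}\,\sigma_P(\lambda)=\sin(\pi\alpha)\sum_{\lambda_i<0}|\lambda_i|^{\alpha},\qquad
\mathrm{Re}\,\sigma_P(\lambda)=\sum_{\lambda_i>0}|\lambda_i|^{\alpha}+\cos(\pi\alpha)\sum_{\lambda_i<0}|\lambda_i|^{\alpha}.
\]
If at least one component is negative, then the imaginary part is strictly positive because $0<\alpha<1$ forces $\sin(\pi\alpha)>0$; if no component is negative, then (as $\lambda\neq0$) the real part is a nonempty sum of strictly positive terms. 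In either case $\sigma_P(\lambda)\neq0$, so $P(D)$ is elliptic in the sense of Definition \ref{elliptic}. This sign-and-branch bookkeeping, which rests entirely on the hypothesis $0<\alpha<1$, is the one step that is not a pure citation, and I expect it to be the main (though minor) obstacle; it is worth flagging that the argument would break down precisely where $\sin(\pi\alpha)$ and $\cos(\pi\alpha)$ conspire to allow cancellation outside this range.

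With ellipticity established, the Poisson-type assertion is immediate from Theorem \ref{FERT}: the solution of $\widetilde{\Delta}_{\alpha}u=f$ with $f\in H^s_{loc}(X)$ satisfies $u\in H^{s+\nu}_{loc}(X)=H^{s+\alpha}_{loc}(X)$. For the Laplace-type assertion, the homogeneous equation $\widetilde{\Delta}_{\alpha}u=0$ has forcing $f=0\in C^{\infty}(X)$, so Corollary \ref{hypoelliptic} (elliptic $\Rightarrow$ hypoelliptic) gives $u\in C^{\infty}(X)$; equivalently, since $0\in H^s_{loc}(X)$ for every $s$, the result just proved yields $u\in\bigcap_s H^s_{loc}(X)=C^{\infty}(X)$ by Sobolev embedding. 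This completes the plan.
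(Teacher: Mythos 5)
Your proposal is correct and follows essentially the same route as the paper: identify $\widetilde{\Delta}_{\alpha}$ as an elliptic fractional operator of order $\nu=\alpha$ and then cite Theorem \ref{FERT} (together with Corollary \ref{hypoelliptic} for the homogeneous case). In fact your ellipticity check is more careful than the paper's one-line justification that ``$\lambda^{\alpha}$ is in the right half complex plane'' --- which is literally false for $\tfrac12<\alpha<1$, since $\arg(\lambda^{\alpha})=\pi\alpha>\pi/2$ for $\lambda<0$ --- whereas your observation that all terms lie in the closed upper half-plane with $\mathrm{Im}>0$ as soon as some component is negative (because $\sin(\pi\alpha)>0$) is the correct argument.
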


\begin{proof}
The fractional operator $\sum_{i=1}^n\partial_i^{\alpha}$ is elliptic when $0<\alpha<1$, since then $\lambda^{\alpha}$ is in the right half complex plane for all $\lambda\in\mathbb{R}$. So Theorem \ref{FERT} applies and the results follow.
\end{proof}

The result proved herein is only one of many possible versions of a fractional elliptic regularity theorem.

For classical PDEs, there are far more elliptic regularity theorems than Theorem \ref{ERT}, which covers only linear partial differential operators whose coefficients are constants in $\mathbb{C}$. Other versions concern linear partial differential operators with non-constant coefficients, perhaps satisfying some $C^k$ or $L^p$ condition; the Sobolev conditions can also sometimes be replaced by $L^p$ conditions on the functions $f$ and $u$. See e.g. \cite[Chapter 6C]{folland1} and \cite[Chapter 6.3]{evans}. These other variants of the elliptic regularity theorem may well be extendable to fractional PDEs just as Theorem \ref{ERT} was.

Furthermore, there are more models of fractional calculus than just the Riemann--Liouville formula. Some of them cooperate with the Fourier transform almost as well as Riemann--Liouville differintegrals do, which was a necessary factor in our proofs here. Thus, with a little more work we may be able to prove results analogous to Theorem \ref{FERT} for fractional PDEs defined using other fractional models, which have different real-world applications from the Riemann--Liouville one.

\section*{Acknowledgements}
This work was supported by a research student grant from the Engineering and Physical Sciences Research Council, UK. The author wishes to thank Anthony Ashton and Dumitru Baleanu for helpful discussions and recommendations to the literature.

\end{document}